\renewcommand*{\backref}[1]{}
\renewcommand*{\backrefalt}[4]{%
    \ifcase #1 (Not cited.)%
    \or        (Cited on page~#2.)%
    \else      (Cited on pages~#2.)%
    \fi}
\newcommand{\CC}{\mathbb{C}}
\newcommand{\RR}{\mathbb{R}}
\newcommand{\NN}{\mathbb{N}}
\numberwithin{equation}{section}
\def\eqref#1{(\ref{#1})}
\newcommand{\N}{{\mathbb N}}
\newcommand{\Z}{{\mathbb Z}}
\newcommand{\C}{{\mathbb C}}
\newcommand{\R}{{\mathbb R}}
\newcommand{\Q}{{\mathbb Q}}
\renewcommand{\H}{{\mathbb H}}
\def\1{\sqrt{-1}\:}
\newcommand{\cntrct}                
{\hspace{2pt}\raisebox{1pt}{\text{$\lrcorner$}}\hspace{2pt}}
\renewcommand{\bar}{\overline}
\renewcommand{\phi}{\varphi}
\renewcommand{\epsilon}{\varepsilon}
\renewcommand{\geq}{\geqslant}
\renewcommand{\leq}{\leqslant}
\renewcommand{\Im}{\operatorname{Im}}
\newtheorem{theorem}{Theorem}[section]
\newtheorem{proposition}[theorem]{Proposition}
\newtheorem{lemma}[theorem]{Lemma}
\newtheorem{corollary}[theorem]{Corollary}
\newtheorem{remark}[theorem]{Remark}
\newtheorem{definition}[theorem]{Definition}
\title{Hodge decomposition for Cousin groups and for  Oeljeklaus-Toma manifolds}
\author[Alexandra Otiman]{Alexandra Otiman}
\address{Alexandra Otiman,
Roma Tre University, Department of Mathematics and Physics, Largo San Leonardo Murialdo,
Rome, Italy
 AND\newline 
Institute of Mathematics ``Simion Stoilow" of the Romanian Academy, 21,
Calea Grivitei, 010702, Bucharest, Romania, AND\newline 
University of Bucharest, Research Center in Geometry, Topology and Algebra, Faculty of Mathematics and Computer Science, 14 Academiei Str., Bucharest, Romania}
\email{aiotiman@mat.uniroma3.it, alexandra.otiman@imar.ro}
\author[Matei Toma]{Matei Toma}
\address{Matei Toma, 
Universit\'e de Lorraine, CNRS, IECL, F-54000 Nancy, France}
\email{Matei.Toma@univ-lorraine.fr}
\urladdr{\href{http://www.iecl.univ-lorraine.fr/~Matei.Toma/}{http://www.iecl.univ-lorraine.fr/~Matei.Toma/}}
\date{\today}
\keywords{Hodge decomposition, Cousin groups, Oeljeklaus-Toma manifolds.}
\subjclass[2010]{32J18, 32M17}
\begin{document}

\maketitle

\begin{abstract}
We compute the Dolbeault cohomology of  geodesically convex   domains contained in Cousin groups which satisfy a strong dispersiveness condition. As a consequence we obtain a description of the Dolbeault cohomology of Oeljeklaus-Toma manifolds and in particular the fact that the Hodge decomposition holds for their cohomology. 
\end{abstract}

\section{Introduction}

A Cousin group $X$ is a quotient $\C^n/ \Lambda$, where $\Lambda$ is a discrete subgroup of rank $n+m$, with $1 \leq m \leq n$, such that the global holomorphic functions on $X$ are  constant. They are named after P. Cousin and introduced in \cite{Cousin}.
In \cite{v83} it is shown that a Cousin group has finite dimensional Dolbeault cohomology groups provided if and only if  the discrete subgroup $\Lambda$ satisfies a certain dispersiveness  condition, which we shall describe in the paper and call {\it weak dispersiveness.} 
Moreover, Hodge decomposition is proven by Vogt to hold on $X$ under this same condition.

The aim of our paper is twofold. Firstly, we extend the ``if'' direction of Vogt's result  to open sets $U$ in $\C^n/ \Lambda$, whose inverse image in $\C^n$ are convex domains, see Theorem~\ref{main}. 
For this we need to impose a new condition on the discrete subgroup $\Lambda$, which we shall call {\it strong dispersiveness}. We show that this condition is actually equivalent to the finite generation of the Dolbeault cohomology of such domains, see Theorem~\ref{thm:reciprocal}, thus extending the ``only if'' direction of the cited result as well. 
Secondly, we use the aforementioned extension to show the Hodge decomposition  and to compute the Dolbeault cohomology of Oeljeklaus-Toma (OT) manifolds, see  Theorem~\ref{Hodgedecomposition}. These are compact complex manifolds associated to number fields allowing a positive number of real embeddings as well as a positive number of complex (non-real) embeddings, see Section \ref{section:OT}. Their construction and first properties are described in \cite{OT}.  Thus OT manifolds give  examples in any dimension of compact complex non-K\" ahler manifolds for which Hodge decomposition holds, or equivalently, the Fr\" olicher spectral sequence degenerates at the first page.
As a consequence, we also obtain a new way of computing the Dolbeault cohomology of Inoue-Bombieri surfaces, which are obtained as Oeljeklaus-Toma manifolds of complex dimension 2, without using powerful tools like the Riemann-Roch theorem or Serre duality and providing instead a more complex-analytical proof. 

\noindent{{\bf Acknowledgements:}} The authors acknowledge the support of the Max Planck Institute for Mathematics in Bonn where part of this research  was carried out. A. O. is partially supported by a grant of Ministry of Research and Innovation, CNCS - UEFISCDI,
project number PN-III-P4-ID-PCE-2016-0065, within PNCDI III. M.T. warmly thanks Karl Oeljeklaus for many interesting discussions on OT-manifolds since the publication of \cite{OT}.


\section{Preliminary facts on Cousin groups}\label{section:prelim}

We present in this section basic definitions and results about Cousin groups and introduce the notions of {\it weak} and {\it strong dispersiveness}, see Definition \ref{def:dispersiveness}. 

\begin{definition}\label{def:Cousin-group} A connected complex Lie group $X$ admitting no non-constant global holomorphic functions is called a {\em Cousin group} or a {\em toroidal group}.
\end{definition}

Cousin groups of complex dimension $n$ are shown to appear as quotients  $X=\C^{n}/\Lambda$, where $\Lambda$ is a discrete subgroup of $\C^n$ of rank $n+m$, with $1 \leq m \leq n$, cf. \cite[Proposition 1.1.2]{AK}. 
Moreover $\Lambda$ may be assumed to be generated by the columns of a matrix of the form:
\begin{equation}\label{standard}
P=\begin{pmatrix} 
O_{m, n-m} & T_{m, 2m} \\ 
I_{n-m} & R_{n-m, 2m} 
\end{pmatrix},
\end{equation}
 which we shall call the normal form,
where $I_{n-m}$ is the $n-m$ identity matrix, $T_{m, 2m}$ is a basis of the lattice of an $m$-dimensional complex torus and $R$ has real entries.  Furthermore, one can  arrange $T$ such that the normal form is:
\begin{equation}\label{standard2}
P=\begin{pmatrix} 
O_{m, n-m} & I_{m} & M+\mathrm{i} N  \\ 
I_{n-m} & R_1 & R_2
\end{pmatrix},
\end{equation}
where $M$ and $N$ have real entries and $N$ is invertible, see \cite[Proposition 2]{v82}, \cite[Proposition 1]{v83}. In the above situation we will say that $P$  is the {\em  period matrix} of $\Lambda$.

\begin{proposition}[{\cite[Proposition 2]{v82}}]\label{prop:Cousin} Suppose that $X=\C^n/\Lambda$ with $\Lambda$ generated by the columns of a matrix $P$ in normal form \eqref{standard}. Then 
$X$ is a Cousin group if and only if for any $\sigma \in \Z^{n-m}\setminus \{0\}$, $^{t}\sigma R \not \in \Z^{2m}$.
\end{proposition}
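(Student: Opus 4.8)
The plan is to identify holomorphic functions on $X$ with $\Lambda$-periodic entire functions on $\C^n$ and to analyse these by a Fourier expansion along the real directions of the lattice. Write the coordinates on $\C^n=\C^m\times\C^{n-m}$ as $(w,v)$, matching the row partition of $P$, so that the generators of $\Lambda$ given by the columns of $P$ split into two families: the vectors $\lambda_i=(0,e_i)$, $i=1,\dots,n-m$, coming from the block $\binom{O}{I}$, and the vectors $\mu_j=(T_j,R_j)$, $j=1,\dots,2m$, coming from the block $\binom{T}{R}$, where $T_j,R_j$ denote the columns of $T$ and $R$. A holomorphic function on $X$ is the same as an entire function $f$ on $\C^n$ invariant under all these translations; in particular $f$ is invariant under $v\mapsto v+e_i$, i.e. periodic of period $1$ in each $\Re v_k$.

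First I would Fourier expand $f$ in the real variable $\Re v$. Using holomorphicity in $v$ to pin down the modes, one obtains
\[
f(w,v)=\sum_{\sigma\in\Z^{n-m}} a_\sigma(w)\,\e^{2\pi\1\,{}^t\sigma v},
\]
where the coefficients
\[
a_\sigma(w)=\e^{2\pi\,{}^t\sigma y_0}\!\int_{[0,1]^{n-m}} f(w,x+\1 y_0)\,\e^{-2\pi\1\,{}^t\sigma x}\,dx
\]
are entire in $w$ (independent of $y_0$ by a contour shift) and the series converges locally uniformly. Next I would impose invariance under the remaining generators $\mu_j$. Since $f(w+T_j,v+R_j)=\sum_\sigma a_\sigma(w+T_j)\,\e^{2\pi\1\,{}^t\sigma R_j}\e^{2\pi\1\,{}^t\sigma v}$, uniqueness of Fourier coefficients yields, for every $\sigma$ and every $j$,
\[
a_\sigma(w+T_j)=\e^{-2\pi\1\,{}^t\sigma R_j}\,a_\sigma(w).
\]

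The crux is then a Liouville argument exploiting that these multipliers are unimodular. Because $\sigma\in\Z^{n-m}$ and $R$ is real, ${}^t\sigma R_j\in\R$, so $\lvert\e^{-2\pi\1\,{}^t\sigma R_j}\rvert=1$ and hence $\lvert a_\sigma\rvert$ is invariant under the full-rank lattice $\Lambda_T=T\,\Z^{2m}\subset\C^m$. Thus $\lvert a_\sigma\rvert$ descends to the compact torus $\C^m/\Lambda_T$ and is bounded, so the entire function $a_\sigma$ is constant by Liouville. A nonzero constant is compatible with the functional equation only if $\e^{-2\pi\1\,{}^t\sigma R_j}=1$ for all $j$, that is, ${}^t\sigma R\in\Z^{2m}$. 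Conversely, whenever ${}^t\sigma R\in\Z^{2m}$ the function $\e^{2\pi\1\,{}^t\sigma v}$ is genuinely $\Lambda$-periodic, and it is non-constant as soon as $\sigma\neq0$. Combining the two directions, $X$ carries a non-constant holomorphic function exactly when some $\sigma\in\Z^{n-m}\setminus\{0\}$ satisfies ${}^t\sigma R\in\Z^{2m}$, which is the negation of the asserted condition. I expect the only delicate point to be the rigorous justification of the Fourier expansion with holomorphic coefficients and of the termwise comparison; the conceptual step, namely noticing that unimodularity of the multipliers is precisely what makes each $a_\sigma$ bounded and forces the integrality ${}^t\sigma R\in\Z^{2m}$, is short once that is in place.
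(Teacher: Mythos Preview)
The paper does not give its own proof of this proposition; it is quoted from \cite[Proposition~2]{v82} without argument. Your proposal is a correct self-contained proof and is in fact the classical argument one finds in Vogt and in \cite{AK}: expand a $\Lambda$-periodic entire function as a Fourier--Laurent series in the $\Z^{n-m}$-periodic directions, use the remaining periods to obtain the automorphy relation $a_\sigma(w+T_j)=\e^{-2\pi\1\,{}^t\sigma R_j}a_\sigma(w)$ with unimodular multipliers, and conclude by Liouville that each $a_\sigma$ is constant, nonzero only when ${}^t\sigma R\in\Z^{2m}$. The points you flag as delicate (holomorphic dependence of $a_\sigma$ on $w$, locally uniform convergence allowing termwise comparison) are routine and do not hide any difficulty; the argument is complete as written.
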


\begin{proposition}\label{prop:global-functions}
Let $X=\C^n/\Lambda$ be a Cousin group and let $U\subset X$ be a non-empty open subset whose inverse image $\tilde U$ in $\C^n$ is convex. Then  any global holomorphic function on $U$ is constant.
\end{proposition}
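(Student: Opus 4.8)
The plan is to reduce the statement for $U$ to the already available fact that the Cousin group $X$ carries no non-constant global holomorphic function, by analysing the lift of $f$ along the periods of $\Lambda$. Let $\pi\colon\C^n\to X$ be the quotient map and set $\tilde f=f\circ\pi$ on $\tilde U=\pi^{-1}(U)$; then $\tilde f$ is holomorphic and $\Lambda$-periodic, and $\tilde U$ is convex and $\Lambda$-invariant. Writing $V=\operatorname{span}_{\R}\Lambda$ (a real subspace of real dimension $n+m$, since $\Lambda$ is a lattice of rank $n+m$), the crucial geometric input is that convexity upgrades periodicity into genuine invariance: for $x\in\tilde U$ the whole coset $x+\Lambda$ lies in $\tilde U$, and its convex hull is exactly $x+V$ because $\Lambda$ is cocompact in $V$. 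Hence $\tilde U$ is $V$-invariant and $\tilde f$ is bounded on every coset $x+V$, since it descends to the compact torus $V/\Lambda$. A second structural fact I would record is that $V$ spans $\C^n$ over $\C$, i.e. $V+JV=\C^n$ where $J$ denotes multiplication by $\sqrt{-1}$; this is visible directly from the normal form \eqref{standard} (the first $n-m$ columns together with their $J$-images span the lower $\C^{n-m}$, while the last columns project onto a real basis of the upper $\C^m$), and it is also forced by the Cousin property, as a nonzero $\C$-linear functional vanishing on $V\supseteq\Lambda$ would descend to a non-constant holomorphic function on $X$.

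The heart of the argument is a Fourier decomposition along the torus $V/\Lambda$. For each character $\mu$ in the character lattice $\Z^{n+m}$ of $V/\Lambda$ I would form the isotypic projection $\phi_\mu(x)=\int_{V/\Lambda}\tilde f(x+\tau)\,\overline{\chi_\mu(\tau)}\,d\tau$, where $\chi_\mu$ is the corresponding unitary character. Being an average of the holomorphic translates $\tilde f(\,\cdot\,+\tau)$, each $\phi_\mu$ is holomorphic on $\tilde U$, and it transforms by $\phi_\mu(x+\tau)=\chi_\mu(\tau)\phi_\mu(x)$, so $|\phi_\mu|$ is constant along the cosets $x+V$. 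Suppose $\mu\neq0$ and $\phi_\mu\not\equiv0$; on the open set where $\phi_\mu\neq0$ a local logarithm $h=\log\phi_\mu$ has directional derivative $\equiv 2\pi\sqrt{-1}\,\langle\mu,u\rangle$ in every direction $u\in V$. Since $h$ is holomorphic, $dh$ is $\C$-linear and hence determined by its restriction to $V+JV=\C^n$; therefore $dh$ is a single \emph{constant} $\C$-linear functional $2\pi\sqrt{-1}\,\ell_\mu$, where $\ell_\mu$ is the unique $\C$-linear extension of $\langle\mu,\cdot\rangle$ off $V$. Thus $\phi_\mu=Ce^{2\pi\sqrt{-1}\,\ell_\mu}$ on all of the connected set $\tilde U$ by analytic continuation, with $\ell_\mu(\Lambda)\subseteq\Z$. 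This exhibits a nonzero $\C$-linear $\ell_\mu$ with $\ell_\mu(\Lambda)\subseteq\Z$, i.e. a non-constant holomorphic character of $X$, contradicting the Cousin property. Hence $\phi_\mu\equiv0$ for every $\mu\neq0$, so $\tilde f$ equals its mean $\phi_0$ and is $V$-invariant.

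The proof then closes by propagation of invariance: since $\tilde f$ is $V$-invariant, $d\tilde f$ vanishes on $V$, hence by $\C$-linearity on $JV$, hence on $V+JV=\C^n$; so $d\tilde f\equiv 0$ on the connected set $\tilde U$, and $\tilde f$, and therefore $f$, is constant.

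The main obstacle is exactly the vanishing of the nonconstant Fourier modes, and it is here that the Cousin hypothesis is indispensable rather than cosmetic. Along the \emph{totally real} part of $V$ (the part not contained in a complex subspace) periodicity and holomorphicity alone do not force constancy: already $e^{2\pi\sqrt{-1}\,\zeta}$ on a strip in one variable is periodic, holomorphic and nonconstant. What rescues the argument is that any surviving mode is forced to be a holomorphic character $e^{2\pi\sqrt{-1}\,\ell_\mu}$ of the group, and the Cousin condition, equivalently Proposition~\ref{prop:Cousin}, rules out every nontrivial such character. The complex directions inside $V$, by contrast, are handled for free by Liouville's theorem, since $\tilde f$ is bounded on the complex cosets $x+(V\cap JV)$; this is the easy half, already absorbed into the Fourier computation above.
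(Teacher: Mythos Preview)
Your argument is correct and follows a genuinely different route from the paper's. The paper works in the normal form \eqref{standard}, shows concretely that $\tilde U=\C^m\times W$ for a convex $\Z^{n-m}$-invariant domain $W\subset\C^{n-m}$, and then argues slice by slice: for fixed $w\in W$ the restriction of $\tilde f$ to $\C^m\times(w+\R^{n-m})$ is bounded because its $\Lambda$-quotient is a compact torus, hence constant on each $\C^m\times\{w\}$ by Liouville; the Cousin criterion of Proposition~\ref{prop:Cousin} then shows that the image of $\C^m\times\{w\}$ is dense in that compact slice, so $\tilde f$ is constant on the slice and one concludes by the identity principle. Your approach is coordinate-free: you Fourier-decompose $\tilde f$ along the real torus $V/\Lambda$, and for each nonzero mode you use holomorphicity together with the fact that $V$ spans $\C^n$ over $\C$ to force $\phi_\mu$ to be a global holomorphic character $Ce^{2\pi\sqrt{-1}\,\ell_\mu}$ of $X$, which the Cousin hypothesis forbids. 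The paper's proof is more elementary, needing only Liouville, a density statement, and the identity principle; yours trades the explicit choice of coordinates for a cleaner structural picture, making transparent that the obstruction is exactly the nonexistence of nontrivial holomorphic characters on a Cousin group. One small point worth tightening in your write-up: the ``unique $\C$-linear extension $\ell_\mu$ of $\langle\mu,\cdot\rangle$'' is not known to exist \emph{a priori} for an arbitrary $\mu$; its existence is supplied by $dh_x$ itself, and uniqueness (hence constancy in $x$) then follows from $V+JV=\C^n$.
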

\begin{proof} We use essentially that $\C^n/ \Lambda$ is a Cousin group, a similar argument as in \cite[Lemma 2.4]{OT} and the fact that $\tilde{U}$ is convex.
We may and will assume that $\Lambda$ is generated by the columns of a matrix $P$ in normal form \eqref{standard}.

For $(z^0_1, \ldots, z^0_n) \in \tilde{U}$,  the set  $conv((z^0_1, \ldots, z^0_n) + \Lambda)$ is a real affine $(n+m)$-dimensional plane in $\C^n$, where by $conv(S)$ we mean the convex hull of $S$. It is also a subset of 
$ \tilde{U}$ by the convexity and $\Lambda$-invariance of $\tilde{U}$.  Since the functions $\mathrm{Im} \, z_{m+1}, \ldots,\mathrm{Im} \, z_{n}$ are $\Lambda$-invariant we get $conv((z^0_1, \ldots, z^0_n) + \Lambda)=\C^m \times ((z^0_{m+1}, \ldots, z^0_n)+\R^{n-m})$. Therefore
\begin{equation}
\tilde{U} = \bigcup_{(z^0_1, \ldots, z^0_n) \in \tilde{U}} \C^m \times ((z^0_{m+1}, \ldots, z^0_n)+\R^{n-m})=\C^m \times \bigcup_{(z^0_1, \ldots, z^0_n) \in \tilde{U}} ((z^0_{m+1}, \ldots, z^0_n)+\R^{n-m}).
\end{equation} 
Thus, $\tilde{U}=\C^m \times W$, where $W \subset \C^{n-m}$ is a convex domain, hence Stein, and moreover $\Z^{n-m}$-invariant.

Let now $f$ be a holomorphic function on $U$, $\tilde{f}$ its lift to $\tilde{U}$ and choose arbitrarily $w \in W$. Since $\C^m \times (w+ \R^{n-m})/ \Lambda$ is diffeomorphic to $(S^1)^{n+m}$, $\tilde{f}$ is bounded on $\C^m \times (w+ \R^{n-m})$ and therefore constant on $\C^m \times \{w\}$. Using the fact that $\C^n/\Lambda$ is a Cousin group and   Proposition~\ref{prop:Cousin} we get $^{t}\sigma R \notin \Z^{2m}$ for all $\sigma \in \Z^{n-m}\setminus \{0\}$, hence the group generated by the column vectors $(I_{n-m} \, \, R)$ is dense in $\R^{n-m}$. Consequently, the image of $\C^m \times \{w\}$ is dense in $\C^m \times (w+ \R^{n-m})/\Lambda$ and thus, $f$ is constant on $\C^m \times (w+ \R^{n-m})/\Lambda$ and $\tilde{f}$ is constant on $\C^m \times (w+ \R^{n-m})$. By the identity principle, $\tilde{f}$ has to be constant on $\tilde{U}$.
\end{proof}

We now introduce two notions of {\em dispersiveness} which will play an important role in this paper.

\begin{definition}\label{def:dispersiveness}
A discrete subgroup $\Lambda$ in normal form \eqref{standard2} is said to be {\em strongly dispersive}, (respectively {\em weakly dispersive}) if 
$$ \forall a \in (0, 1), \ (respectively \ \exists a \in (0, 1)), \ \exists C(a)>0, \ \forall  \sigma \in \Z^{n-m}\setminus \{0\}, \ \forall \tau \in \Z^{2m}$$
\begin{equation}\label{latice}
|| ^{t}\sigma R + ^{t}\tau|| \geq C(a)a^{|\sigma|}.
\end{equation}
 
\end{definition}

In \cite{v82} the following example of a discrete subgroup $\Lambda_\alpha$ is considered with period basis $P_\alpha$ in normal form: 
\begin{equation}\label{eq:example}
P_\alpha=\begin{pmatrix} 
0 & 1 & \mathrm{i} \\ 
1 & \alpha & 0
\end{pmatrix},
\end{equation}

where $\alpha$ is a real number. By  Proposition~\ref{prop:Cousin}  
  $\CC^2/\Lambda_\alpha$ is a Cousin group if and only if $\alpha$ is irrational.

Vogt shows in \cite{v82} that for 
$\alpha=\sum_{j=1}^\infty \frac{1}{10^{10^{j!}}}$ the discrete subgroup 
$\Lambda_\alpha$ is not weakly dispersive.

\begin{remark}\label{rem:strong-and-weak} 
Set $u_0:=1$, $u_{j+1}:=10^{u_j}$ for all $j\in \NN$, and
$$\alpha:=\sum_{j=1}^\infty \frac{1}{u_j}.$$ Then the discrete subgroup $\Lambda_\alpha$ generated by the columns of the matrix $P_\alpha$ given by \eqref{eq:example} is
  weakly dispersive but not strongly dispersive.
\end{remark}
\begin{proof} 
The strong (respectively weak) dispersiveness condition for $\Lambda_\alpha$ is rephrased as 
$$ \forall a \in (0, 1), \ (respectively \ \exists a \in (0, 1)), \ \exists C(a)>0, \ \forall  q \in \Z\setminus \{0\}, \ \forall p \in \Z$$
\begin{equation}\label{eq:dispersiveness_for_alpha}
| q\alpha-p | \geq C(a)a^{|q|}.
\end{equation}

For $q=u_k$, $k\ge 1$, we get
$$\inf_{p\in\Z}| q\alpha-p |=\sum_{j=k+1}^\infty \frac{u_k}{u_j}<\frac{2u_k}{u_{k+1}}<\frac{2^{u_k}}{u_{k+1}}=\frac{1}{5^q},$$
hence $\Lambda_\alpha$ cannot be strongly dispersive for our choice of $\alpha$.

We now check the weak dispersiveness of $\Lambda_\alpha$. For a real number $\beta$ we denote by $\{\beta\}$ its fractional part. Then for $u_k\le q<u_{k+1}$, $k>0$ we get
$$\{ q\alpha\}>\frac{1}{10^{u_k}}\ge \frac{1}{10^q}.$$ It remains to estimate $1-\{ q\alpha\}$. 
But it is clear that the $u_k+1$-st decimal digit of $\{ q\alpha\}$ is $0$, hence the $u_k+1$-st decimal digit of $1-\{ q\alpha\}$ is $9$. Thus 

$$1-\{ q\alpha\}\ge\frac{9}{10}\frac{1}{10^{u_k}}\ge\frac{9}{10} \frac{1}{10^q},$$
which proves weak dispersiveness of $\Lambda_\alpha$ by taking $a=\frac{1}{10}$.
\end{proof}

Examples of strongly dispersive discrete subgroups are provided by the following

\begin{proposition}\label{prop:algebraic_lattices}
If $\Lambda$  is a discrete subgroup defining a Cousin group and such that all the entries of some period matrix are algebraic numbers, then $\Lambda$ is strongly dispersive.
\end{proposition}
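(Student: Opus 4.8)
The plan is to reduce strong dispersiveness to a Liouville-type inequality for algebraic numbers. First I would observe that, after the standard reduction to the normal form \eqref{standard2} — which involves only $\Q$-linear and algebraic operations and hence preserves algebraicity of the entries — we may assume that the matrix $R=(R_{ij})$ has algebraic entries. Writing $v(\sigma,\tau):={}^{t}\sigma R+{}^{t}\tau\in\R^{2m}$ and fixing once and for all a norm $\abs{\cdot}$ on $\Z^{n-m}$ (all being equivalent), the strong dispersiveness condition amounts to bounding $\operatorname{dist}({}^{t}\sigma R,\Z^{2m})=\min_{\tau}\norm{v(\sigma,\tau)}$ from below by $C(a)a^{\abs{\sigma}}$ for every $a\in(0,1)$. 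By Proposition~\ref{prop:Cousin} the Cousin hypothesis guarantees that this distance is strictly positive for every $\sigma\in\Z^{n-m}\setminus\{0\}$. The key point is that I will establish the much stronger \emph{polynomial} lower bound
$$\operatorname{dist}({}^{t}\sigma R,\Z^{2m})\geq \frac{c}{\abs{\sigma}^{D-1}},$$
where $K$ is the number field generated by the entries of $R$ and $D=[K:\Q]$; strong dispersiveness then follows formally, since polynomial decay dominates every exponential.

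For the Liouville estimate, fix $\sigma\neq 0$ and choose $\tau$ to be the nearest integer point, so that each coordinate of $v=v(\sigma,\tau)$ has absolute value $\leq 1/2$; in particular $\abs{\tau}\leq C_2\abs{\sigma}$ for a constant $C_2$ depending only on $R$. Since $v\neq 0$ by the Cousin condition, some coordinate $\beta:=\sum_i\sigma_i R_{ij}+\tau_j$ is nonzero, and $\beta\in K$. Let $d_0$ be a positive integer with $d_0R_{ij}$ an algebraic integer for all $i,j$; then $d_0\beta$ is a nonzero algebraic integer of $K$, so its norm $\mathrm{N}_{K/\Q}(d_0\beta)$ is a nonzero rational integer and $\abs{\mathrm{N}_{K/\Q}(d_0\beta)}\geq 1$. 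Estimating each of the $D-1$ embeddings $\phi\colon K\to\C$ other than the given one by $\abs{\phi(d_0\beta)}\leq C_3\max(\abs{\sigma},\abs{\tau})\leq C_4\abs{\sigma}$, and isolating the factor $\abs{d_0\beta}$ in the product that defines the norm, yields $\abs{\beta}\geq c\,\abs{\sigma}^{-(D-1)}$, and hence the claimed bound via $\norm{v}\geq\abs{\beta}$.

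Finally, to pass from the polynomial bound to strong dispersiveness, fix $a\in(0,1)$. Since $\abs{\sigma}^{D-1}a^{\abs{\sigma}}\to 0$ as $\abs{\sigma}\to\infty$, the quantity $S(a):=\sup_{\sigma\neq 0}\abs{\sigma}^{D-1}a^{\abs{\sigma}}$ is finite and positive, and taking $C(a):=c/S(a)$ gives
$$\norm{v(\sigma,\tau)}\geq \frac{c}{\abs{\sigma}^{D-1}}\geq C(a)a^{\abs{\sigma}}$$
for all $\sigma\in\Z^{n-m}\setminus\{0\}$ and all $\tau\in\Z^{2m}$, which is exactly \eqref{latice}. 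I expect the only genuinely delicate point to be the bookkeeping of the uniform constants $c,C_2,C_3,d_0$, so that each depends only on $R$ (equivalently on $K$) and not on $\sigma$; the arithmetic core — that a nonzero algebraic integer has norm of absolute value at least $1$, i.e. the Liouville inequality — is precisely where algebraicity enters. A minor preliminary worth verifying is that the reduction to the normal form \eqref{standard2} indeed keeps the entries algebraic.
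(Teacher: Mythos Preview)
Your proposal is correct and follows essentially the same strategy as the paper: establish a polynomial lower bound $\norm{{}^{t}\sigma R + {}^{t}\tau}\geq C\abs{\sigma}^{A}$ via a Liouville-type inequality for algebraic numbers, then observe that any polynomial lower bound dominates every exponential $C(a)a^{\abs{\sigma}}$. The only difference is cosmetic: the paper quotes the polynomial bound from \cite[Theorem~4.3]{bo} (which in turn invokes \cite[Theorem~1.5]{Enc}), whereas you supply the classical norm argument directly, making the proof self-contained.
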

\begin{proof}
By using a generalization of Liouville's Theorem on the approximation of algebraic numbers (\cite[Theorem 1.5]{Enc}) it is proved in  \cite[Theorem 4.3]{bo} that the discrete subgroup $\mathcal{O}_K$ is weakly dispersive, see Section \ref{section:OT} for notations.

More precisely in the proof of \cite[Theorem 4.3]{bo} it is shown that if $R$ is a $k \times l$ matrix with elements algebraic numbers, then there exist constants $C>0$ and $A < 0$ such that for any $\sigma \in \Z^{k} \setminus \{0\}$ and every $\tau \in \Z^{l}$, $||^{t}\sigma R + ^{t}\tau|| \geq C |\sigma|^A$. But this condition is  stronger than strong dispersiveness, since clearly for any $a \in (0, 1)$, there exists a  constant $C(a)$ such that $|\sigma|^A \geq C(a) a^{|\sigma|}$ for all  $\sigma \in \Z^{k} \setminus \{0\}$. 
\end{proof}

The following result proved by Chr. Vogt in \cite{v82}, \cite{v83} will be extended in Section \ref{section:Cousin} to the case of  open sets  in $\C^n/ \Lambda$, whose inverse image in $\C^n$ are convex domains. 

\begin{theorem}[{\cite{v82},\cite{v83}}]\label{thm:vogt}
If $X= \C^n/ \Lambda$ is a Cousin group, then $H^1(X, \mathcal{O})$ is finite dimensional if and only if 
$\Lambda$ is weakly dispersive. Moreover, in this situation all the Dolbeault cohomology groups $H^{p, q}_{\overline{\partial}}(X)$ are finite dimensional and $X$ satisfies the Hodge decomposition.

\end{theorem}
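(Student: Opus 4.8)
The plan is to reduce the statement to the computation of the groups $H^{0,q}_{\bar\partial}(X)=H^q(X,\Oo)$ and then to analyze the $\bar\partial$-complex by Fourier expansion along the maximal compact torus of $X$. Since $X$ is a complex Lie group, its holomorphic cotangent bundle is holomorphically trivial, so $\Omega^p_X\cong\Oo_X^{\oplus\binom{n}{p}}$ and hence $H^{p,q}_{\bar\partial}(X)\cong H^{0,q}_{\bar\partial}(X)^{\oplus\binom{n}{p}}$. Thus it suffices to treat $p=0$: once $H^{0,q}$ is understood, the finiteness of all $H^{p,q}$, the equivalence of the finiteness of $H^1(X,\Oo)=H^{0,1}_{\bar\partial}(X)$ with weak dispersiveness, and the Hodge decomposition will all follow.

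Writing $\Lambda$ in the normal form \eqref{standard2}, the maximal compact subgroup $K:=(\Lambda\otimes_\Z\R)/\Lambda$ is a real torus of dimension $n+m$, and as a real manifold $X\cong K\times\R^{n-m}$, the transverse $\R^{n-m}$ being measured by $\Im z_{m+1},\dots,\Im z_n$; in particular $X$ is homotopy equivalent to $(S^1)^{n+m}$. A smooth $(0,q)$-form on $X$ expands in a Fourier series along $K$, indexed by characters $\chi$ in the dual lattice $\Lambda^*\cong\Z^{n+m}$, with coefficients that are smooth forms in the fibre variables. First I would record that $\bar\partial$ preserves this decomposition, so that the cohomology is assembled from the individual modes. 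The anti-holomorphic torus directions $d\bar z_{j_1}\wedge\cdots\wedge d\bar z_{j_q}$ with $j_i\in\{1,\dots,m\}$, coming from the complex torus factor $T^m=\C^m/\Lambda_T$ of the fibration $X\to T^m$, are the expected harmonic representatives, yielding $\dim H^{0,q}_{\bar\partial}(X)=\binom{m}{q}$ in the dispersive case.

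The heart of the matter is the mode-by-mode inversion of $\bar\partial$. On the $\chi$-th Fourier component the equation $\bar\partial u=f$ reduces to inverting a constant-coefficient first-order operator along the fibre $\R^{n-m}$, and its invertibility on the non-resonant modes together with the norm of its inverse are governed, up to harmless factors, by the distance $\|{}^t\sigma R+{}^t\tau\|$ to the lattice, where $(\sigma,\tau)\in\Z^{n-m}\times\Z^{2m}$ encodes the part of $\chi$ transverse to the torus directions — exactly the quantity of Definition~\ref{def:dispersiveness}. The key step is then the estimate matching data against solution: a smooth form $f$ on $X$ has Fourier coefficients decaying faster than any power of $|\chi|$, so if $\|{}^t\sigma R+{}^t\tau\|\ge C(a)a^{|\sigma|}$ for some $a\in(0,1)$ — which is precisely weak dispersiveness — then the formal solution $u=\sum_\chi u_\chi$ converges to a smooth form. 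Every $\bar\partial$-closed form is therefore cohomologous to its finitely many resonant components, giving finite dimensionality and the value $\binom{m}{q}$. \emph{This estimate, pitting the super-exponential decay of smooth Fourier data against the at-worst-exponential growth $a^{-|\sigma|}$ of the inverse symbol, is the main obstacle and the precise place where weak dispersiveness is used in an essential way.}

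For the converse I would argue contrapositively: if weak dispersiveness fails, then for every $a\in(0,1)$ there is a sequence of characters along which $\|{}^t\sigma R+{}^t\tau\|$ decays faster than $a^{|\sigma|}$, and from these one assembles a smooth $\bar\partial$-closed $(0,1)$-form whose only potential is non-smooth, producing infinitely many linearly independent classes and hence $\dim H^1(X,\Oo)=\infty$. Finally, Hodge decomposition follows formally from the tensor structure already noted: with $\dim H^{p,q}_{\bar\partial}(X)=\binom{n}{p}\binom{m}{q}$ and $H^k(X,\C)\cong\wedge^k\C^{n+m}$ of dimension $\binom{n+m}{k}$, the Vandermonde identity $\sum_{p+q=k}\binom{n}{p}\binom{m}{q}=\binom{n+m}{k}$ matches the dimensions, and one checks that the explicit harmonic representatives realize the de Rham classes compatibly, giving $H^k(X,\C)\cong\bigoplus_{p+q=k}H^{p,q}_{\bar\partial}(X)$.
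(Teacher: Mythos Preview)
Your overall architecture---reduce to $p=0$ by triviality of $\Omega^p_X$, Fourier-analyse along the maximal real torus $K$, invert $\bar\partial$ mode by mode, and finish Hodge decomposition by the Vandermonde identity $\sum_{p+q=k}\binom{n}{p}\binom{m}{q}=\binom{n+m}{k}$---is exactly the strategy of Vogt (and of the paper's extension in Theorem~\ref{main}). The final comparison with $H^*_{dR}$ is also the same as Step~2 there.

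However, there is a genuine gap in your key estimate. You write that ``a smooth form $f$ on $X$ has Fourier coefficients decaying faster than any power of $|\chi|$'' and then invoke ``super-exponential decay of smooth Fourier data'' to dominate the growth $a^{-|\sigma|}$ of the inverse symbol. These two claims are inconsistent: smoothness on the compact torus $K$ only yields \emph{rapid polynomial} decay of the Fourier coefficients, and polynomial decay multiplied by exponential growth diverges. So as written the formal solution $\sum_\chi u_\chi$ need not converge, and the argument does not close. This is precisely the point where the weak dispersiveness hypothesis alone is \emph{not} enough without a preparatory step.

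The missing ingredient is Vogt's Proposition~4 (reproduced in this paper as Lemma~\ref{holodep} in the domain setting): every $\Lambda$-invariant $(0,q)$-form is $\bar\partial$-cohomologous to one whose coefficients depend \emph{holomorphically} on $z_{m+1},\dots,z_n$. This has two effects that your sketch lacks. First, holomorphicity in the fibre variables means that the $\sigma$-part of the Fourier expansion is a Laurent series in $w=\exp(2\pi i\,z)$ convergent on all of $(\C^*)^{n-m}$, hence its coefficients decay faster than $b^{|\sigma|}$ for \emph{every} $b\in(0,1)$; this is the genuine super-exponential decay needed to absorb $a^{-|\sigma|}$ for the single $a$ provided by weak dispersiveness. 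Second, after this reduction only $d\bar z_J$ with $J\subset\{1,\dots,m\}$ occur and the relevant $\bar\partial$-components become pure multiplications on each mode, so the ``ODE along $\R^{n-m}$'' that you allude to disappears and the inversion is literally division by the symbol $a_{\pi,\rho,\sigma}$. With this preparatory lemma inserted, the rest of your outline is correct and coincides with Vogt's proof.
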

Additionally, Vogt gives several equivalent conditions for the finite dimensionality of $H^1(X, \mathcal{O})$ in terms of the discrete subgroup $\Lambda$, the holomorphic line bundles on $X$ and the generators of $H^1(X, \mathcal{O})$. 


\section{Dolbeault cohomology of convex domains in Cousin groups. 
}\label{section:Cousin}

In this section we will prove analogous results to those of Theorem \ref{thm:vogt} for open subsets $U$ in Cousin groups $X=\C^n/\Lambda$, whose inverse images 
in $\C^n$ 
are convex domains. 
We will call such open sets $U$ in $X$ simply {\it  convex} since they are the geodesically convex open subsets of the Lie group $X$ for the unique system of geodesics which are left and right invariant; these are given by translates of one-parameter subgroups of $X$. In particular, the definition of a  convex open subset in a Cousin group $X$ does not depend on the chosen presentation $\C^n/\Lambda$ for $X$; see also \cite[Proposition 1.1.8]{AK} for an alternative argument.


\begin{theorem}\label{main}
Let $U$ be a  domain of a Cousin group $X=\C^n/\Lambda$, whose inverse image $\tilde U$ in $\C^n$ 
is a convex domain.  
 If $\Lambda$ is strongly dispersive then $H^q(U, \Omega^{p})$ is finitely generated and moreover, 
$$\{[dz_I \wedge d\overline{z}_J] \mid I \subseteq \{1, \ldots, n\}, J \subseteq \{1, \ldots, m\}, |I|=p, |J|=q\}$$ 
is a basis and thus, $\mathrm{dim}_{\C} H^q(U, \Omega^{p})={n \choose p} \cdot {m \choose q}$.
\end{theorem}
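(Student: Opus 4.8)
The plan is to realise $U$ as the quotient of a Stein manifold by a free $\Z^{2m}$-action, to reduce the computation of $H^q(U,\Omega^p)$ to a group-cohomology computation, and then to diagonalise that computation by a Laurent expansion along the ``real'' directions; the only genuinely analytic input will be a convergence estimate supplied by strong dispersiveness.

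First I would make the structure of $\tilde U$ explicit. By Proposition~\ref{prop:global-functions} we may assume $\Lambda$ is in normal form \eqref{standard2} and that $\tilde U=\C^m\times W$ with $W\subseteq\C^{n-m}$ convex and invariant under the integer real translations $\Z^{n-m}$ coming from the first $n-m$ columns of $P$. Convexity together with this translation invariance forces $W=\R^{n-m}+\mathrm{i}\,\Omega$ to be a tube domain over a convex base $\Omega\subseteq\R^{n-m}$. Quotienting only by the rank-$(n-m)$ sublattice $\Lambda_0$ generated by these columns turns the periodic directions into a logarithmically convex Reinhardt domain $D\subseteq(\C^*)^{n-m}$ via $w_j=\exp(2\pi\mathrm{i}\,z_{m+j})$, so that $Y:=\tilde U/\Lambda_0=\C^m\times D$ is Stein. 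The residual lattice $\Lambda/\Lambda_0\cong\Z^{2m}$, generated by the torus columns $(T_k,R_k)$, then acts freely and properly discontinuously on $Y$ by $z'\mapsto z'+T_k$, $w_j\mapsto\exp(2\pi\mathrm{i}\,(R_k)_j)\,w_j$, with $U=Y/\Z^{2m}$.

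Next, because $Y$ is Stein we have $H^s(Y,\Omega^p)=0$ for $s>0$, so the Leray spectral sequence of the Galois covering $Y\to U$ collapses and yields $H^q(U,\Omega^p)\cong H^q(\Z^{2m},\Omega^p(Y))$, the group cohomology of $\Z^{2m}$ acting on global holomorphic $p$-forms. Since the deck transformations are translations in the $z$-coordinates they fix every $dz_i$; hence $\Omega^p(Y)=\bigoplus_{|I|=p}\mathcal{O}(Y)\,dz_I$ as $\Z^{2m}$-modules, and the computation reduces to $\binom{n}{p}$ copies of $H^q(\Z^{2m},\mathcal{O}(Y))$, which already accounts for the factor $\binom{n}{p}$ and the forms $dz_I$. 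I would compute the latter via the Koszul complex of the $2m$ commuting operators $\gamma_k-1$. Expanding $f\in\mathcal{O}(Y)$ as a Laurent series $\sum_{\nu\in\Z^{n-m}}a_\nu(z')\,w^\nu$ with $a_\nu$ entire on $\C^m$, each $\gamma_k$ preserves the mode $\nu$ and acts on it by $a_\nu(z')\mapsto \exp(2\pi\mathrm{i}\,\langle\nu,R_k\rangle)\,a_\nu(z'+T_k)$. Thus the Koszul complex splits over $\nu$, and the $\nu$-summand computes the cohomology $H^q(T,L_\nu)$ of the flat line bundle $L_\nu$ on the torus $T=\C^m/\Lambda_T$ attached to the unitary character $T_k\mapsto \exp(2\pi\mathrm{i}\,\langle\nu,R_k\rangle)$. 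For $\nu=0$ this is $H^q(T,\mathcal{O}_T)$, of dimension $\binom{m}{q}$ with basis $\{d\overline z_J:J\subseteq\{1,\dots,m\},\,|J|=q\}$; for $\nu\ne0$ the character is nontrivial precisely by the Cousin condition ${}^{t}\nu R\notin\Z^{2m}$ of Proposition~\ref{prop:Cousin}, and a nontrivial unitary flat bundle on a complex torus has vanishing cohomology in all degrees. Formally, then, only the $\nu=0$ mode survives, giving exactly the asserted basis $dz_I\wedge d\overline z_J$ and dimension $\binom{n}{p}\binom{m}{q}$.

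The main obstacle is that this mode-by-mode computation is purely formal: $\mathcal{O}(Y)$ is not the algebraic direct sum of the modes but a Fréchet completion, and the cohomology of the completed Koszul complex need not equal the direct sum of the mode-wise cohomologies unless the relevant coboundary operators have closed range and one can re-synthesise a \emph{convergent} solution from the mode-wise solutions. Concretely, inverting $\gamma_k-1$ on the $\nu$-th mode costs a factor controlled by $\operatorname{dist}({}^{t}\nu R,\Z^{2m})^{-1}$, whereas the Laurent coefficients of a holomorphic function on the tube/Reinhardt domain $D$ are constrained only to grow sub-exponentially at a rate dictated by how $\Omega$ is exhausted by compact convex subsets. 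Matching these two requires the lower bound \eqref{latice} to hold for \emph{every} $a\in(0,1)$, i.e. exactly strong dispersiveness (Definition~\ref{def:dispersiveness}); this is what guarantees that inverting the coboundary mode-wise and summing produces a function holomorphic on all of $D$, that the ranges are closed, and hence that $H^q(U,\Omega^p)$ is finite dimensional and equal to the $\nu=0$ contribution. I therefore expect the bulk of the work, following Vogt's estimates in \cite{v82,v83} but now over a variable convex base, to lie in establishing these uniform convergence bounds and the closed-range property; the algebraic skeleton above then delivers the stated basis and dimension.
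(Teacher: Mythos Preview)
Your outline is sound and would yield a valid proof once the analytic estimates are filled in, but it is organised differently from the paper's argument. The paper stays in Dolbeault cohomology on $U$ throughout: it first uses the holomorphic fibration $U\to\tilde T=\C^m/T$ with Stein fibre $W/\Z^{n-m}$ to reduce (Lemma~\ref{holodep}, via a \v{C}ech--Dolbeault zigzag) any $\Lambda$-periodic $(0,q)$-form to one with coefficients holomorphic in $z_{m+1},\dots,z_n$; it then Fourier-expands such a form in \emph{all} periodic directions $(\pi,\rho,\sigma)$ and writes down an explicit $\overline\partial$-primitive $\eta$ whose convergence is precisely where the bound $\|a_{\pi,\rho,\sigma}\|\ge C_1(a)a^{|\sigma|}$ supplied by strong dispersiveness is used; finally it invokes the Fr\"olicher inequality against the known de Rham cohomology of $U\simeq(S^1)^{n+m}\times\R^{n-m}$ to upgrade ``the constant forms generate'' to ``they form a basis''. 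You instead pass first to the Stein quotient $Y=\tilde U/\Z^{n-m}$ and use the Cartan--Leray identification $H^q(U,\Omega^p)\cong H^q(\Z^{2m},\Omega^p(Y))$---the very device the paper reserves for the converse, Theorem~\ref{thm:reciprocal}---and then split the Koszul complex over Laurent modes $\nu$, recognising each piece as $H^q(\tilde T,L_\nu)$. Your reorganisation buys injectivity for free (the Koszul differential preserves Laurent modes, so a $\nu=0$ class that bounds in $\mathcal O(Y)$ already bounds in $\mathcal O(\C^m)$), so you avoid the Fr\"olicher step. Two small cautions: the spectral sequence you want is Cartan--Leray (or Hochschild--Serre), not Leray; and ``inverting $\gamma_k-1$ on the $\nu$-th mode'' is a shorthand that conceals the fact that the $\Z^{2m}$-module in mode $\nu$ is $\mathcal O(\C^m)$ with a translation-plus-twist action, not a one-dimensional character---making the vanishing $H^q(\tilde T,L_\nu)=0$ quantitative forces a further Fourier expansion on $\tilde T$, which lands you on essentially the paper's estimate for $a_{\pi,\rho,\sigma}$.
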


We follow the lines of the proofs of Proposition 4 and Proposition 5 in \cite{v83} and adapt them to the new setting. 

We start with  the following lemma:

\begin{lemma}\label{holodep} Let $q\geq 1$. Any $\Lambda$-invariant $\bar\partial$-closed $(0, q)$-form $\omega$ on $\tilde{U}$ is $\overline{\partial}$-cohomologous to a $\Lambda$-invariant $(0, q)$-form on $\tilde{U}$, whose coefficients depend holomorphically on $z_{m+1}, \ldots, z_n$.
\end{lemma}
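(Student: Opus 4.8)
The plan is to combine the product decomposition of $\tilde U$ from Proposition~\ref{prop:global-functions} with the Stein property of its second factor. Write $z'=(z_1,\dots,z_m)$ for the torus directions and $z''=(z_{m+1},\dots,z_n)$ for the rest. By Proposition~\ref{prop:global-functions} we have $\tilde U=\C^m\times W$ with $W\subset\C^{n-m}$ convex; moreover, as in its proof, $W$ is invariant under translations by the dense subgroup of $\R^{n-m}$ generated by the columns of $(I_{n-m}\,\ R)$. My first step is to upgrade this to full translation invariance: the intersection of $W$ with any real affine subspace $w+\R^{n-m}$ (varying only $\mathrm{Re}\,z''$) is convex, open, and contains a dense subset of that subspace, and a proper convex open subset of $\R^{n-m}$ lies in a half-space, so this intersection must be the whole subspace. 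Hence $W=\R^{n-m}+\mathrm{i}\,\Omega$ is a tube domain over a convex open set $\Omega\subset\R^{n-m}$, and in particular $W$ is invariant under \emph{all} real translations in the $z''$-directions.

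Next I would split $\overline{\partial}=\overline{\partial}'+\overline{\partial}''$ into its parts in $\overline{z}_1,\dots,\overline{z}_m$ and in $\overline{z}_{m+1},\dots,\overline{z}_n$. The assertion that the coefficients of a form depend holomorphically on $z''$ is precisely that each coefficient is killed by $\partial/\partial\overline{z}_j$ for every $j>m$; so the goal is to subtract from $\omega$ the $\overline{\partial}$ of a $\Lambda$-invariant form in order to remove the anti-holomorphic $z''$-dependence of all coefficients. Treating $z'$ as a parameter, this is a $\overline{\partial}''$-problem on the Stein (indeed convex) domain $W$. Filtering the Dolbeault complex by the number of $d\overline{z}''$-factors, the Stein property of $W$ makes the vertical ($\overline{\partial}''$-)cohomology vanish in positive degree and equal, in degree zero, to the forms whose coefficients are holomorphic in $z''$; thus, via the induced homotopy, every class is represented by such a form. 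The only point requiring care is that this $\overline{\partial}''$-homotopy be performed $\Lambda$-equivariantly.

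I would secure equivariance by choosing a solution operator for $\overline{\partial}''$ on $W$ that commutes with all real $z''$-translations and leaves $z'$ untouched. Such an operator automatically commutes with the $\Lambda$-action, since every $\lambda\in\Lambda$ acts on $z''$ by a real translation while its $z'$-part is ignored by the operator. Concretely, since $\tilde U/\Lambda$ fibers over $\Omega$ with compact real-torus fibers, I would expand $\omega$ in a Fourier series along the fibers; on each Fourier mode $\overline{\partial}''$ becomes a first-order linear system in the variable $\mathrm{Im}\,z''\in\Omega$, which is solvable on the convex set $\Omega$ by integration (an integrating factor, with the direction of integration chosen according to the sign of the frequency). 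The main obstacle is exactly this analytic step: one must verify that these fiberwise systems can be solved so as to preserve both smoothness and $\Lambda$-invariance, with the Fourier coefficients of the primitive controlled by those of $\omega$. Crucially, solving in the non-compact $\mathrm{Im}\,z''$-direction never involves dividing by a small frequency, so no dispersiveness hypothesis is needed here — this is the soft counterpart of the genuinely arithmetic small-divisor estimates that strong dispersiveness will later supply when the compact torus directions $\overline{\partial}'$ are treated. Once the equivariant $\overline{\partial}''$-homotopy is in place, an induction on the $\overline{\partial}''$-degree removes the anti-holomorphic $z''$-dependence of every coefficient and yields the desired $\Lambda$-invariant representative.
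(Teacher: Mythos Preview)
Your approach is genuinely different from the paper's. The paper does not work with the product $\tilde U=\C^m\times W$ and a $\overline\partial''$-homotopy; instead it passes to the quotient $U=\tilde U/\Lambda$ and uses the \emph{other} fibration, namely $\pi:U\to\tilde T:=\C^m/T$ with Stein fiber $F=W/\Z^{n-m}$ (a logarithmically convex Reinhardt domain in $(\C^*)^{n-m}$). A $\overline\partial$-closed $(0,q)$-form on $U$ is represented by a \v Cech cocycle $(\xi_I)_I$ on a finite acyclic cover $\{V_I\cong U_I\times F\}$; each $\xi_I$ is Laurent-expanded in the fiber variable, $\xi_I=\sum_\alpha\xi_{I,\alpha}w^\alpha$, giving cocycles for the flat line bundles $L^{-\alpha}$ on $\tilde T$; and a zig-zag with a \emph{finite} partition of unity converts these back to Dolbeault representatives $\omega_\alpha$ whose sum $\sum_\alpha\omega_\alpha w^\alpha$ is manifestly holomorphic in $z''$. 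The only estimate needed is the trivial bound $|f_{K,\alpha}|\le(kC)^q\max_I|\xi_{I,\alpha}|$, so convergence is inherited for free from that of the original Laurent series.

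Your route---solve $\overline\partial''$ on the tube domain $W$ by an operator commuting with real $z''$-translations, then induct on the $d\overline z''$-degree---is conceptually clean and would indeed yield a $\Lambda$-invariant representative with coefficients holomorphic in $z''$ (and no $d\overline z''$-factors). But the step you yourself flag as the ``main obstacle'' is genuinely the crux, and it is more delicate than your sketch suggests. After Fourier expansion along the compact $(S^1)^{n+m}$-fibers and the integrating factor $e^{2\pi\beta\cdot y}$, the problem becomes the Poincar\'e lemma on the (possibly unbounded) convex set $\Omega$ \emph{with parameters}, and you must control all Fourier coefficients uniformly to recover a smooth primitive; for $n-m>1$ and frequencies $\beta$ with mixed signs there is no single ``direction of integration'' that gives decay, and for $\beta=0$ the primitive may grow on unbounded $\Omega$, so local uniformity in $y$ must be argued separately. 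None of this is insurmountable, but it is real work that you have not done. The paper's \v Cech/partition-of-unity argument trades this analysis for a finite sum and an immediate bound, which is why it is preferred here.
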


The proof of this Lemma follows the same steps as in \cite[Proposition 4]{v83}. The only new thing we have to check is that 
 $U$ is the total space of a locally trivial holomorphic fibration over a complex torus with fibre a Stein manifold. To this aim we remark as in the proof of  Proposition~\ref{prop:global-functions} that $\tilde{U}=\C^m \times W$, where $W$ is a convex $\Z^{n-m}$-invariant domain.
Since $U=\tilde{U}/\Lambda$, the map
\begin{equation}\label{fibrare}
\pi: U \rightarrow \C^m/T,
\end{equation}
given by $\pi([z_1, \ldots, z_n])=\widehat{(z_1, \ldots, z_m)}$, is well-defined, where $[\cdot]$ and $\hat{\cdot}$ are classes with respect to taking quotients by $\Lambda$ and by the lattice generated by the columns of $T$, respectively, cf. \cite[Proposition 2]{v83}. 
Clearly, $\pi$ is a holomorphic map and in fact, \eqref{fibrare} is a fibration with fibre  isomorphic to $F: = W/\Z^{n-m}$. 
Via the map $exp(2\pi \mathrm{i} \, \cdot)$, $\C^{n-m}/\Z^{n-m}$ is biholomorphic to $(\C^*)^{n-m}$, so we regard $F$ directly as an open subset of $(\C^*)^{n-m}$. In fact, seen in this way, $F$ is a (relatively complete) logarithmically convex Reinhardt domain in $(\C^*)^{n-m}$ and is therefore a domain of holomorphy in $\C^{n-m}$ and thus Stein,  \cite[Lemma 1.7]{Car} (see also \cite[Theorem 1.11.13]{JP}). 

\hfill

We can now proceed to the proof of Theorem~\ref{main}.
Unlike the situation of \cite{v83} we need to deal with domains of holomorphy different from $(\C^*)^{n-m}$. This is no major obstruction in the case of the proof of the previous Lemma, but will entail substantial modifications in the proof of Theorem \ref{main} as compared to  \cite[Proposition 5]{v83}. 
More precisely, in loc. cit. essential use is made of the classical fact that a Laurent series $\sum_{\sigma\in\Z^p} a_\sigma z^\sigma$ is convergent on $(\C^*)^{p}$ if and only if  $\lim\sup_{\sigma\in\Z^p}\sqrt[|\sigma|]{|a_\sigma|}=0$. In our set-up this is no longer applicable and we need to show that we get positive convergence radii of corresponding Laurent series around each point of our domain of holomorphy. For this the strong dispersiveness condition will be used in a crucial way.

\begin{proof}
We divide the proof in two steps. 

{\it Step 1:} We will show that any $\overline{\partial}$-closed, $\Lambda$-periodic $(p, q)$-form $\omega$ on $\tilde{U}$ is $\overline{\partial}$-cohomologous to a form $\sum_{I, J} c_{I J} dz_I \wedge d\overline{z}_J$ with constant coefficients $c_{IJ} \in \C$.

If $\omega$ is a $(p, 0)$-form, the statement is obvious, as $\omega$ has to be of type $\sum_I f_I dz_I$, with $f_I$ holomorphic $\Lambda$-invariant functions on $\tilde{U}$, but these are  constant by  Proposition~\ref{prop:global-functions}.

Let now $q \geq 1$. Once we prove the statement for $(0, q)$-forms, it will immediately follow for $(p, q)$-forms as well, since $\omega=\sum_{I,J} f_{IJ} dz_I \wedge d\overline{z}_J=\sum dz_I \wedge (\sum_J f_{IJ} d\overline{z}_J)$ and each $\sum_{J}f_{IJ} d\overline{z}_J$ is $\overline{\partial}$-closed.

Therefore, take $\omega=\sum_J f_J d\overline{z}_J$ on $\tilde{U}$, $\overline{\partial}$-closed, $\Lambda$-periodic. By Lemma~\ref{holodep}, we may assume that $f_J$ depend holomorphically on $z_{m+1}, \ldots, z_n$ and $J \subseteq \{1, 2, \ldots, m\}$.

The strategy will be, as in the proof of \cite[Proposition 5]{v83}, to define a $\Lambda$-invariant $(0, q-1)$-form $\eta$ on $\tilde{U}$, using the Fourier expansion of the $\Lambda$-invariant functions $f_J$ and show there exist $c_J \in \C$ such that $\omega-\sum_J c_J d\overline{z}_J = \overline{\partial}\eta$. Formally, $\eta$ has the same expression as in the proof of \cite[Proposition 5]{v83}, but the difficult part in our case will be to show that $\eta$ is well defined, namely, that its coefficients are convergent series. 

Let $(z_1, \ldots, z_m) =: x+ \mathrm{i} y$ and $(z_{m+1}, \ldots, z_n)=: w$. For any $\pi, \rho \in \Z^m$ and $\sigma \in \Z^{n-m}$, we define the following function on $\tilde{U}$:
\begin{equation}
\gamma_{\pi, \rho, \sigma}(z_1, \ldots, z_n)
:= exp \left( 2 \pi \mathrm{i} \cdot \left( (^{t}\pi - ^{t}\sigma R_1)x + 
(^{t} \rho-^{t}\pi M+ 
^{t}\sigma(R_1 M-R_2))\cdot N^{-1}y+^{t}\sigma w\right)\right)
\end{equation}
Each $\gamma_{\pi, \rho, \sigma}$ is $\Lambda$-invariant and thus, we develop $f_J$ in Fourier series on $\tilde{U}$:
\begin{equation}
f_J=\sum_{\pi, \rho, \sigma} f_{J, \pi, \rho, \sigma} \gamma_{\pi, \rho, \sigma},
\end{equation}
where $f_{J, \pi, \rho, \sigma} \in \C$.
We define the following: 
\begin{equation}
a_{\pi, \rho, \sigma}:=\tfrac{1}{2} \left( (^{t}\pi - ^{t}\sigma R_1) + \mathrm{i} (^{t}\rho-^{t}\pi M + ^{t}\sigma (R_1 M -R_2))N^{-1} \right) \in \C^m
\end{equation}

\begin{equation}
B := \{\sum_{j=1}^m b_j d\overline{z}_j 
\mid b_j \in \C\} 
\end{equation}

\begin{equation}
\lambda_{\pi, \rho, \sigma}: B \rightarrow \C, \qquad \lambda_{\pi, \rho, \sigma} (\sum_{j=1}^{m}b_j \cdot d\overline{z}_j):=\frac{\sum_{j=1}^m b_j a_{\pi, \rho, \sigma, j}}{2 \pi \mathrm{i} ||a_{\pi, \rho, \sigma}||^2},
\end{equation}
where $a_{\pi, \rho, \sigma, j}$ is the $j$-th component of $a_{\pi, \rho, \sigma}$. We will see at a further point in the proof that $||a_{\pi, \rho, \sigma}||$ does not vanish.

We extend $\lambda_{\pi, \rho, \sigma}$ to a homomorphism:
\begin{equation}
\lambda_{\pi, \rho, \sigma} \rfloor : \Lambda^kB \rightarrow \Lambda^{k-1}B \qquad
\lambda_{\pi, \rho, \sigma} \rfloor (\alpha_1 \wedge \ldots \wedge \alpha_k)=\sum_{p=1}^k (-1)^{k-p} \lambda_{\pi, \rho, \sigma}(\alpha_p) \alpha_1 \wedge \ldots \wedge \hat{\alpha}_p \wedge \ldots \wedge \alpha_k
\end{equation}
and define the $\Lambda$-periodic $(0, q-1)$-form on $\tilde{U}$.
\begin{equation}\label{eta}
\eta=\sum_{(\pi, \rho, \sigma) \neq 0} (-1)^{q-1} \left( \lambda_{\pi, \rho, \sigma} \rfloor (\sum_J f_{J, \pi, \rho, \sigma} d\overline{z}_J)\right) \gamma_{\pi, \rho, \sigma}.
\end{equation}
By a straightforward, but lengthy, computation, presented in \cite{v83}, one gets that $\overline{\partial} \eta = \omega - \sum_J f_{J, 0, 0, 0}d\overline{z}_J$.
Step 1 will be clear once we show that  $\eta$ is a convergent series. It is at this point that 
the strong dispersiveness of $\Lambda$ will play an essential role.

We now proceed to the proof of the convergence of the series $\eta$. We show first by using \eqref{latice} that for any $a \in (0, 1)$, there exists a constant $C_1(a)>0$ such that for any $(\pi, \rho, \sigma) \neq 0$, $||a_{\pi, \rho, \sigma}|| \geq C_1(a) a^{|\sigma|}$.

Take $k_1 :=||MN^{-1}||$, $k_2 := \frac{1}{||N||}$.

Then clearly
\begin{equation}\label{k1}
||\alpha MN^{-1}|| \leq k_1 ||\alpha||, \qquad \forall \alpha \in \R^{m}
\end{equation}
\begin{equation}\label{k2}
 ||\alpha N^{-1}|| \geq k_2||\alpha||, \qquad \forall\alpha \in \R^m
\end{equation}

Let $k:=\frac{k_2}{1+k_1+k_2}$. If $(\pi, \rho, \sigma) \neq 0$ is such that $2 ||\mathrm{Re}(a_{\pi, \rho, \sigma})||=||^{t}\pi-^{t}\sigma R_1||\leq kC(a)a^{|\sigma|}$, then by \eqref{latice}, we get $||^{t}\rho-^{t}\sigma R_2|| \geq (1-k)C(a) a^{|\sigma|}$ and therefore by \eqref{k1} and \eqref{k2}:

\begin{equation}
2||\mathrm{Im}(a_{\pi, \rho, \sigma})||=||(^{t}\rho - ^{t}\sigma R_2)N^{-1} - (^{t}\pi - ^{t}\sigma R_1)MN^{-1}|| \geq (k_2(1-k)C(a) -k_1kC(a)) a^{|\sigma|}=kC(a)a^{|\sigma|}.
\end{equation}

This means that for $C_1(a):=\frac{1}{2}kC(a)$ we get
\begin{equation}\label{ineqA}
||a_{\pi, \rho, \sigma}|| \geq C_1(a) a^{|\sigma|}, \forall (\pi, \rho, \sigma) \neq 0.
\end{equation}
In particular  $||a_{\pi, \rho, \sigma}||$ does not vanish for $(\pi, \rho, \sigma) \neq 0$.

By the expression in \eqref{eta}, we deduce that:
\begin{equation}
\eta=\sum_{|K|=q-1} \left( \sum_{(\pi, \rho, \sigma) \neq 0} t_{\pi, \rho, \sigma}\gamma_{\pi, \rho, \sigma} \right)d\overline{z}_K,
\end{equation}
where $t_{\pi, \rho, \sigma}$ is a finite sum of terms of type $\pm f_{K \cup \{j\}, \pi, \rho, \sigma}\frac{a_{\pi, \rho, \sigma, j}}{||a_{\pi, \rho, \sigma}||^2}$. We need to show that $h_K:=\sum_{(\pi, \rho, \sigma) \neq 0} t_{\pi, \rho, \sigma}\gamma_{\pi, \rho, \sigma}$ is convergent on $\tilde{U}$.

We prove that for each $|J|=q$, $h_J:=\sum_{(\pi, \rho, \sigma) \neq 0} f_{J, \pi, \rho, \sigma} \frac{a_{\pi, \rho, \sigma, j}}{||a_{\pi, \rho, \sigma}||^2}\gamma_{\pi, \rho, \sigma}$ is convergent on $\tilde{U}$ and this will suffice.

Fix $z_0=x_0 + \mathrm{i}y_0 \in \C^m$. Then 
$$h_J(z_0, w)=\sum_{\sigma} \left( \sum_{\pi, \rho} f_{J, \pi, \rho, \sigma} \frac{a_{\pi, \rho, \sigma, j}}{||a_{\pi, \rho, \sigma}||^2}exp(2 \pi \mathrm{i} a(z_0)) \right)exp(2 \pi \mathrm{i} ^{t}\sigma \cdot w),$$
where $a(z):= (^{t}\pi - ^{t}\sigma R_1)x + (^{t}\rho-^{t}\pi M+ ^{t}\sigma(R_1 M-R_2))\cdot N^{-1}y \in \R$ , for any $z \in {\C^m}$. 

Recall that $\tilde{U}=\C^m \times W$, where $W$ is a $\Z^{n-m}$-invariant convex Stein domain in $\C^{n-m}$.
We prove now that $\sum_{\sigma}\left(\sum_{\rho, \pi} f_{J, \pi, \rho, \sigma} \frac{a_{\pi, \rho, \sigma, j}}{||a_{\pi, \rho, \sigma}||^2}exp\left(2\pi \mathrm{i} (a(z))\right)\right)w^{\sigma}$ is uniformly absolutely convergent on $\C^m \times W_1$, where $W_1=exp(2\pi \mathrm{i} W)$. 
Note that $W_1$ is a Reinhardt domain, therefore, $W_1=\mathbb{T}^{n-m} \cdot S$, where $S=\{(|w_1|, \ldots, |w_{n-m}|) \mid (w_1, \ldots, w_{n-m}) \in W_1\}$.

Choose $w^0=(w^0_1, \ldots, w^0_{n-m}) \in W_1\cap\RR_+^{n-m}$, a neighbourhood $U_{z_0}$ of $z_0$ in $\C^m$ and $S^{w^0}_{\epsilon} = (w^0_1-\epsilon, w^0_1 + \epsilon) \times  \ldots \times (w^0_{n-m}-\epsilon, w^0_{n-m} + \epsilon)$, for a small $\epsilon >0$, such that $\bar S^{w^0}_{\epsilon}\subset W_1$.

For any $a \in (0, 1)$, on $U_{z_0} \times \mathbb{T}^{n-m} \cdot S^{w^0}_{\tfrac{\epsilon}{2}}$, we have by \eqref{ineqA}:

\begin{dmath}
\sum_{\sigma}\left(\sum_{\rho, \pi} |f_{J, \pi, \rho, \sigma}| \cdot|\frac{a_{\pi, \rho, \sigma, j}}{||a_{\pi, \rho, \sigma}||^2}| \cdot |exp\left(2\pi \mathrm{i} (a(z))| \right)\right)|w^{\sigma}| = \sum_{\sigma}\left(\sum_{\rho, \pi} |f_{J, \pi, \rho, \sigma}| \cdot |\frac{a_{\pi, \rho, \sigma, j}}{||a_{\pi, \rho, \sigma}||^2}|\right)|w^{\sigma}| \leq \\
mC_1(a)^{-1}\sum_{\sigma}\left(\sum_{\rho, \pi} |f_{J, \pi, \rho, \sigma}| \cdot a^{-|\sigma|} \right)|w^{\sigma}|.
\end{dmath}

We split now the series $\sum_{\sigma}\left(\sum_{\rho, \pi} |f_{J, \pi, \rho, \sigma}| \cdot a^{-|\sigma|} \right)|w^{\sigma}|$ in a sum of $2^{n-m}$ series
 
\begin{equation}
h^a_g:=\sum_{\sigma \in D_g} \left(\sum_{\rho, \pi} |f_{J, \pi, \rho, \sigma}| \cdot a^{-|\sigma|} \right)|w^{\sigma}|,
\end{equation}
where $g: \{1, \ldots, n-m\} \rightarrow \{-1, 1\}$ and $D_g= \{(\sigma_1, \ldots, \sigma_{n-m}) \in \Z^{n-m} \setminus \{\noindent{\bf{0}}\} \mid \mathrm{sgn}(\sigma_i)=g(i)\}$. 
By convention we consider $\mathrm{sgn}(0)=1$.
Then on $U_{z_0} \times \mathbb{T}^{n-m} \cdot S^{w^0}_{\frac{\epsilon}{2}}$:

\begin{align*}
h^a_g  = \\
\sum_{\sigma \in D_g, \rho, \pi} |f_{J, \pi, \rho, \sigma}| \cdot a^{-|\sigma|} |\tfrac{w_1}{w^0_1+g(1)\epsilon}|^{\sigma_1}\ldots|\tfrac{w_{n-m}}{w^0_{n-m}+g(n-m)\epsilon}|^{\sigma_{n-m}} |w^0_1+g(1)\epsilon|^{\sigma_1}\ldots |w_n^0 + g(n-m)\epsilon|^{\sigma_{n-m}}\\
 \leq \sum_{\sigma \in D_g, \rho, \pi} |f_{J, \pi, \rho, \sigma}| \cdot a^{-|\sigma|} \delta^{|\sigma|}_{(w_1^0, \ldots, w^0_{n-m}), \epsilon} |w^0_1+g(1)\epsilon|^{\sigma_1}\ldots |w_n^0 + g(n-m)\epsilon|^{\sigma_{n-m}}, 
\end{align*}
where $\delta_{(w_1^0, \ldots, w^0_{n-m}), \epsilon}= max\{|\tfrac{w^0_1+g(j)\tfrac{\epsilon}{2}}{w^0_1+g(j)\epsilon}|^{\mathrm{sgn}(\sigma_j)} \qquad \mid \sigma_{j} \neq 0\} < 1$.

We can choose now $a$ to be $\delta_{(w_1^0, \ldots, w^0_{n-m}), \epsilon}$ and thus, 
\begin{equation*}
h^{\delta}_g \leq \sum_{\sigma \in D_g, \rho, \pi} |f_{J, \pi, \rho, \sigma}| \cdot |w^0_1+g(1)\epsilon|^{\sigma_1}\ldots |w_n^0 + g(n-m)\epsilon|^{\sigma_{n-m}}.
\end{equation*}

But the series in the right hand side above is bounded by a constant $C((w_1^0, \ldots, w^0_{n-m}), \epsilon)$, since $f_J$ is holomorphic in $z_{m+1}, \ldots, z_n$ and thus, the series $\sum_{\sigma, \pi, \rho} f_{J, \pi, \rho,\sigma} exp(2 \pi \mathrm{i} z)w^{\sigma}$ is absolutely uniformly convergent on $U_{z_0} \times \mathbb{T}^{n-m} \cdot S^{w^0}_{\frac{\epsilon}{2}}$.

What we actually proved above is that $\sum_{\sigma \neq 0}\sum_{(\pi, \rho)} f_{J, \pi, \rho, \sigma} \frac{a_{\pi, \rho, \sigma, j}}{||a_{\pi, \rho, \sigma}||^2}\gamma_{\pi, \rho, \sigma}$ is convergent. But if $\sigma=0$, we observe that for $(\pi, \rho) \neq 0$, $||a_{\pi, \rho, 0}|| \geq 1$. Indeed, it is clear by $2||\mathrm{Re}\, a_{\pi, \rho, 0}||=||^{t}\pi||$ and $2||\mathrm{Im}\, a_{\pi, \rho, 0}||=||^{t}\rho-^{t}\pi||$. 

Consequently, the missing part of $h_J$, which is $\sum_{(\pi, \rho) \neq 0} f_{J, \pi, \rho, 0} \frac{a_{\pi, \rho, 0, j}}{||a_{\pi, \rho, 0}||^2}\gamma_{\pi, \rho, 0}$, is dominated by   $\sum_{(\pi, \rho) \neq 0} f_{J, \pi, \rho, 0} \gamma_{\pi, \rho, 0}$, which is convergent since $f_J$ is. We conclude that $\eta$ is convergent on $\tilde{U}$ and Step 1 is proved.

\hfill

{\it Step 2.} We prove now that $\{[dz_{I} \wedge d\overline{z}_J] \mid I=(1 \leq i_1 < \ldots < i_p \leq n), J=(1 \leq i_1 < \ldots < i_q \leq m)\}$ is a basis for $H^{q}(U, \Omega^{p})$. Step 1 tells us that $H^{q}(U, \Omega^{p})$ is generated by $\{[dz_{I} \wedge d\overline{z}_J] \mid I=(1 \leq i_1 < \ldots < i_p \leq n), J=(1 \leq i_1 < \ldots < i_q \leq m)\}$, therefore $\mathrm{dim}_{\C}H^{q}(U, \Omega^p) \leq {n \choose p}{m \choose q}$.
Since all $H^q (U, \Omega^p)$ and $H^*_{dR}(U, \C)$ are finitely generated, we can apply Fr\" olicher's inequality and get:
\begin{equation}\label{frolicher}
\mathrm{dim}_{\C} H_{dR}^l(U, \C) \leq \sum_{p+q=l}\mathrm{dim}_{\C} H^q(U, \Omega^p) \leq \sum_{p+q=l} {n \choose p}{m \choose q}={n+m \choose l}
\end{equation}
As $U \simeq (S^1)^{n+m} \times \R^{n-m}$, $\mathrm{dim}_{\C} H_{dR}^l(U, \C)={n+m \choose l}$, therefore we have equality in \eqref{frolicher} and the conclusion follows. 
\end{proof}

The fact that equality holds in \eqref{frolicher} immediately implies

\begin{corollary}
If $\Lambda$ is strongly dispersive, then the Hodge decomposition holds  for any convex domain $U$ in the Cousin group $\C^n/\Lambda$. 
\end{corollary}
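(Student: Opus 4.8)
The plan is to derive the Hodge decomposition directly from the equality in the Fr\"olicher inequality \eqref{frolicher} established in the proof of Theorem~\ref{main}, namely by showing that this equality forces the Fr\"olicher (Hodge--de Rham) spectral sequence to degenerate at its first page. Recall that for any complex manifold $U$ there is a spectral sequence with first page $E_1^{p,q}=H^q(U,\Omega^{p})$ abutting to $H^{p+q}_{dR}(U,\C)$, and that the associated graded of the induced Hodge filtration $F^\bullet$ on $H^l_{dR}(U,\C)$ is $\bigoplus_{p+q=l}E_\infty^{p,q}$. From this point of view the Fr\"olicher inequality
$$\dim_{\C} H^l_{dR}(U,\C)=\sum_{p+q=l}\dim_{\C}E_\infty^{p,q}\leq\sum_{p+q=l}\dim_{\C}E_1^{p,q}=\sum_{p+q=l}\dim_{\C}H^q(U,\Omega^{p})$$
is simply the statement that passing from $E_1$ to $E_\infty$ can only lower dimensions.

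First I would invoke Theorem~\ref{main} to obtain $\dim_{\C}E_1^{p,q}=\dim_{\C}H^q(U,\Omega^{p})={n \choose p}{m \choose q}$, together with $U\simeq(S^1)^{n+m}\times\R^{n-m}$, which gives $\dim_{\C}H^l_{dR}(U,\C)={n+m \choose l}$. By the Vandermonde identity $\sum_{p+q=l}{n \choose p}{m \choose q}={n+m \choose l}$, so equality holds throughout \eqref{frolicher} for every $l$. The key step is then the standard observation that equality in the Fr\"olicher inequality in all total degrees forces every higher differential $d_r$ with $r\geq 1$ to vanish, so that $E_1=E_\infty$: a nonzero $d_r$ would strictly decrease the total dimension in some total degree $l$, contradicting the equality established there. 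Hence the spectral sequence degenerates at $E_1$.

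Finally, degeneration at $E_1$ is precisely the Hodge decomposition: the Hodge filtration on $H^l_{dR}(U,\C)$ has graded pieces $F^p/F^{p+1}\cong E_1^{p,l-p}=H^{l-p}(U,\Omega^{p})$, and being a filtration of finite-dimensional complex vector spaces it splits, yielding
$$H^l_{dR}(U,\C)\cong\bigoplus_{p+q=l}H^q(U,\Omega^{p})$$
for every $l$. I expect no genuine obstacle here, since the corollary is a formal consequence of the dimension count in Theorem~\ref{main}; the only point meriting care is the equivalence between equality in \eqref{frolicher} and degeneration of the spectral sequence, which is a standard fact about filtered complexes. If one prefers to avoid the spectral sequence language altogether, one can argue equivalently that the constant-coefficient, hence $d$-closed, representatives $dz_I\wedge d\overline{z}_J$ furnished by Theorem~\ref{main} define de Rham classes spanning a space of dimension $\sum_{p+q=l}{n \choose p}{m \choose q}={n+m \choose l}=\dim_{\C}H^l_{dR}(U,\C)$, forcing the resulting map $\bigoplus_{p+q=l}H^q(U,\Omega^{p})\to H^l_{dR}(U,\C)$ to be an isomorphism.
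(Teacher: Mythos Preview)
Your proposal is correct and takes essentially the same approach as the paper: the paper's entire argument is the single remark that equality in \eqref{frolicher}, already established at the end of the proof of Theorem~\ref{main}, immediately gives the Hodge decomposition (which in this paper means precisely the dimension identity $\dim_{\C}H^l_{dR}(U,\C)=\sum_{p+q=l}\dim_{\C}H^q(U,\Omega^p)$, cf.\ the statement of Theorem~\ref{Hodgedecomposition}). You simply unpack this by explaining the standard spectral-sequence mechanism behind the Fr\"olicher inequality and noting that equality forces $E_1=E_\infty$; this is correct and slightly more detailed than what the paper records.
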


We next state and prove a converse of Theorem~\ref{main}:

\begin{theorem}\label{thm:reciprocal}
If $H^1(U, \mathcal{O})$ is finite dimensional for every open  
subset $U$ of the Cousin group $X=\C^n / \Lambda$ such that its inverse image $\tilde U$ in $\C^n$ is convex, then $\Lambda$ is strongly dispersive. 
\end{theorem}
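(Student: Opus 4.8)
The plan is to prove the contrapositive: assuming $\Lambda$ is not strongly dispersive, I will produce one ``convex'' domain $U$ whose cohomology $H^1(U,\mathcal O)$ is infinite dimensional. I keep the notation $a_{\pi,\rho,\sigma}\in\C^m$ and the $\Lambda$-invariant characters $\gamma_{\pi,\rho,\sigma}$ from the proof of Theorem~\ref{main}, and record the identity $\overline\partial\gamma_{\pi,\rho,\sigma}=2\pi\mathrm i\sum_{l=1}^m(a_{\pi,\rho,\sigma})_l\,\gamma_{\pi,\rho,\sigma}\,d\overline z_l$, together with $|\gamma_{\pi,\rho,\sigma}|=e^{-2\pi\,{}^t\sigma\,v}$ where $v=\mathrm{Im}(z_{m+1},\dots,z_n)$. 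Negating Definition~\ref{def:dispersiveness} fixes one $a_0\in(0,1)$ for which, taking $C=1/k$ in \eqref{latice}, there are $\sigma_k\neq0$ and $\tau_k\in\Z^{2m}$ with $\norm{{}^t\sigma_kR+{}^t\tau_k}\le\epsilon_k a_0^{|\sigma_k|}$ and $\epsilon_k\to0$. Because $X$ is a Cousin group, Proposition~\ref{prop:Cousin} gives $\mathrm{dist}({}^t\sigma R,\Z^{2m})>0$ for each fixed $\sigma\neq0$, so no value of $\sigma$ can occur infinitely often in this sequence and therefore $|\sigma_k|\to\infty$.

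Next I arrange that the corresponding $a_k$ are extremely small. Writing $\tau_k=(\tau_{k,1},\tau_{k,2})$ and setting $\pi_k:=-\tau_{k,1}$, $\rho_k:=-\tau_{k,2}$, the two blocks ${}^t\pi_k-{}^t\sigma_kR_1$ and ${}^t\rho_k-{}^t\sigma_kR_2$ are exactly $-({}^t\sigma_kR+{}^t\tau_k)$; feeding these into the formulas for $\mathrm{Re}\,a_k$ and $\mathrm{Im}\,a_k$ and using $\norm{MN^{-1}}$ (cf.~\eqref{k1}) and $\norm{N^{-1}}$ yields a constant $C'>0$ with
\[
\norm{a_k}\le C'\epsilon_k a_0^{|\sigma_k|},\qquad\text{i.e.}\qquad \frac1{\norm{a_k}}\ge\frac1{C'\epsilon_k}\,e^{\mu|\sigma_k|},\quad \mu:=\log(1/a_0)>0;
\]
in particular $a_k\neq0$, since otherwise ${}^t\sigma_kR\in\Z^{2m}$. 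I then fix the tube domain $\tilde U:=\C^m\times W$ with $W:=\R^{n-m}+\mathrm iB$, $B:=\{v:\norm v<r\}$, where $r>0$ is chosen so small that $2\pi\,|{}^t\sigma\,v|\le\tfrac14\mu\,|\sigma|$ for all $\sigma$ and all $v\in B$ (possible by equivalence of norms). As every column of $P$ has real $w$-block, $\tilde U$ is convex and $\Lambda$-invariant, so $U:=\tilde U/\Lambda$ is a ``convex'' domain; moreover each slice $\{v=v_0\}$, $v_0\in B$, descends to a compact torus $(S^1)^{n+m}\subset U$ on which the $\gamma_{\pi,\rho,\sigma}$ restrict to pairwise distinct unitary characters.

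Now I exhibit infinitely many independent classes. Partition the index set $\{k\}$ into countably many infinite subsets $K_1,K_2,\dots$, write $a_k,\gamma_k$ for $a_{\pi_k,\rho_k,\sigma_k},\gamma_{\pi_k,\rho_k,\sigma_k}$, and set
\[
\omega_j:=\sum_{k\in K_j}c_k\Big(\sum_{l=1}^m (a_k)_l\,d\overline z_l\Big)\gamma_k,\qquad c_k:=\frac1{k^2\norm{a_k}}\,e^{-\frac\mu2|\sigma_k|}.
\]
Since $|c_k|\norm{a_k}=k^{-2}e^{-\frac\mu2|\sigma_k|}$, $\norm{a_k}\le C'$ and $|\gamma_k|\le e^{\frac14\mu|\sigma_k|}$ on $\tilde U$, the series and all its term-by-term derivatives converge locally uniformly on $\tilde U$ (the gain $e^{-\frac14\mu|\sigma_k|}$ dominates every polynomial in $|\sigma_k|$ produced by differentiation), so each $\omega_j$ is a genuine $\overline\partial$-closed $(0,1)$-form on $U$. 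If some finite combination $\sum_{j\le N}b_j\omega_j$ were exact, say $\overline\partial h=\sum_{j\le N}b_j\omega_j$ with $h$ a function on $U$ (in fact any $L^2_{\mathrm{loc}}$ primitive is automatically smooth, as $(\partial_{\overline z_1},\dots,\partial_{\overline z_n})$ is elliptic), then comparing Fourier coefficients via $\overline\partial\gamma_k=2\pi\mathrm i\sum_l(a_k)_l\gamma_k\,d\overline z_l$ would force $h_k=b_jc_k/2\pi\mathrm i$ for $k\in K_j$ and $h_{\mathrm{mode}}=0$ on all other nonzero modes.

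Finally I derive a contradiction. Restricting the smooth function $h$ to the compact slice $\{v=0\}$, its Fourier coefficients with respect to the orthonormal characters $\gamma_k|_{\{v=0\}}$ equal the $h_k$ and hence are bounded. But for any $j\le N$ with $b_j\neq0$,
\[
|h_k|=\frac{|b_j|}{2\pi}\,c_k\ \ge\ \frac{|b_j|}{2\pi C'}\,\frac1{k^2\epsilon_k}\,e^{\frac\mu2|\sigma_k|}\ \xrightarrow[\ k\in K_j,\ k\to\infty\ ]{}\ \infty,
\]
a contradiction. Hence no nontrivial combination is exact, the classes $[\omega_j]$ are linearly independent, $H^1(U,\mathcal O)$ is infinite dimensional, and $\Lambda$ must be strongly dispersive. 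The crux — and the step I expect to be most delicate — is the calibration in the second and third paragraphs: the tube half-width $r$ must be small relative to the exponential rate $\mu=\log(1/a_0)$, and the weight $e^{-\frac\mu2|\sigma_k|}$ has its rate $\tfrac\mu2$ placed strictly between the slice growth rate $\tfrac14\mu$ of $|\gamma_k|$ and the gain rate $\mu$ of $1/\norm{a_k}$, so that the defining series of $\omega_j$ still converges in every $C^k$-norm while the forced primitive is ejected from $L^2$ already on the central slice. This is precisely where the failure of \eqref{latice} for the single value $a_0$ is converted into an infinite-dimensional cokernel of $\overline\partial$.
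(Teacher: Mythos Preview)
Your proof is correct and takes a genuinely different route from the paper's. The paper also argues by contraposition but works instead with the half-space domain $\tilde U=\C^m\times\H^{n-m}$ (after first passing to a subsequence on which the signs of the components $\sigma(k)_i$ are constant and then applying a coordinate flip), identifies $H^1(U,\mathcal O)$ with the group cohomology $H^1(\Lambda,H^0(\tilde U,\mathcal O))$, and exhibits a one-parameter family of explicit holomorphic $1$-cocycles $A^{(x)}$, $x\in(0,1)$, built from the functions $\exp(2\pi\mathrm i\,{}^t\sigma(k)\cdot w)$; linear independence is then deduced from a comparison of radii of convergence after restricting to the diagonal disc $\mathbb D\hookrightarrow\mathbb D^{n-m}$. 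Your argument avoids both the sign-sorting step and the passage to group cohomology by choosing a small tube about the real slice, works directly with $\overline\partial$-closed $(0,1)$-forms assembled from the characters $\gamma_k$, and derives the contradiction from the blow-up of Fourier coefficients on the compact central torus --- an elementary and self-contained mechanism. The paper's route, in return, produces an uncountable independent family and makes the link to automorphy factors (hence to line bundles) transparent.

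One technical point to tighten: your Fourier matching tacitly assumes that the triples $(\pi_k,\rho_k,\sigma_k)$ are pairwise distinct, so that each mode $\gamma_k$ occurs exactly once in $\sum_j b_j\omega_j$. You only argued that no value of $\sigma$ recurs infinitely often; to make the argument clean you should, as the paper does, first pass to a subsequence with all $\sigma_k$ distinct (for large $k$ the bound $\norm{a_k}<1$ then forces the associated $\tau_k$, hence $(\pi_k,\rho_k)$, to be unique). After this cosmetic fix everything goes through unchanged.
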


\begin{proof}
We shall argue by contradiction, namely, we show that if $\Lambda$ is not strongly dispersive, then there exists an open convex $U$ such that $H^1(U, \mathcal{O})$ is infinite dimensional. Indeed, if $\Lambda$ is not strongly dispersive, 
$$\exists a \in (0, 1), \forall C>0, \exists \sigma(C) \in \Z^{n-m} \setminus\{0\}, \exists \tau(C) \in \Z^{2m}, ||^{t}\sigma(C)R - ^{t}\tau(C)|| < Ca^{|\sigma(C)|},$$
which by taking $C=\frac{1}{k}$, with $k \in \N^*$, implies that:
\begin{equation}\label{negatie}
\exists a \in (0, 1), \forall k \in \N^*, \exists \sigma(\tfrac{1}{k}) \in \Z^{n-m} \setminus\{0\}, \exists \tau(\tfrac{1}{k}) \in \Z^{2m}, ||^{t}\sigma(\tfrac{1}{k})R - ^{t}\tau(\tfrac{1}{k})|| < \frac{1}{k}a^{|\sigma(\tfrac{1}{k})|}.
\end{equation}
For convenience, we shall use the notation $\sigma(k)$ instead of $\sigma(\frac{1}{k})$. We can assume that $\sigma(k) \neq \sigma(l)$ for $k \neq l$, otherwise we can extract a subsequence $(k_i)_{i\in \N}$ such that $\sigma(k_i)$ are all different. Indeed, if we had a finite set of values for the sequence $(\sigma(k))_{k \in \N}$, we would have a subsequence $(k_j)_{j \in \N}$ such that $\sigma(k_j)=c \neq \noindent{{\bf 0}}$, for all $k_j$. Then by \eqref{negatie}, we get that $^{t}cR \in \Z^{2m}$, which is impossible by Proposition~\ref{prop:Cousin}. Moreover, by taking again a subsequence if needed, we can consider that for any position $i \in \{1, \ldots, n-m\}$, $\mathrm{sgn}(\sigma(k)_i)$ is constant. Therefore, \eqref{negatie} tells us that:
\begin{equation}\label{negatie2}
\exists a \in (0, 1), \forall k \in \N^*, \exists \sigma(k) \in \Z^{n-m} \setminus\{0\}, \exists \tau(k) \in \Z^{2m}, ||^{t}\sigma(k)R - ^{t}\tau(k)|| < \frac{1}{k}a^{|\sigma(k)|}\le a^{|\sigma(k)|},
\end{equation}
such that $\sigma(k) \neq \sigma(l)$, whenever $k \neq l$ and hence $|\sigma(k)| \xrightarrow[k \rightarrow \infty]{} \infty$. 

We are led to consider
$\tilde{U}:=\C^m \times \prod_{i=1}^{n-m}\H^{\mathrm{sgn}(\sigma(k)_i)}$ and $U:=\tilde{U}/\Lambda$, where we set $\H^{+1}:=\H$, $\H^{-1}:=-\H=\{z \in \C \mid \Im z<0\}$. Here we have set
$\mathrm{sgn}(0)=+1$
 by abuse of notation.
In fact by applying the automorphism $$z\mapsto(z_1,...,z_m,  \mathrm{sgn}(\sigma(k)_1)\cdot z_{m+1},...,\mathrm{sgn}(\sigma(k)_{n-m}) \cdot z_n)$$ of $\C^n$ we reduce ourselves to the situation where all $\sigma(k)_i$ are non-negative. In the sequel we will suppose that this is the case. Thus the considered convex  domain in $\C^n$ will be  $\tilde U=\C^m\times \H^{n-m}$. 

We are in a situation where the sheaf cohomology $H^1(U,\mathcal{O})$ may be computed as the group cohomology $H^1(\Lambda, H^0(\tilde U, \mathcal{O}))$, where $\Lambda$ on $H^0(\tilde U, \mathcal{O})$ naturally via translation on $\tilde U$, see \cite[Appendix to Section 2]{MumfordAbelianVarieties}.
Thus $$H^1(U,\mathcal{O})\cong H^1(\Lambda, H^0(\tilde U, \mathcal{O}))= Z^1(\Lambda, H^0(\tilde U, \mathcal{O}))/B^1(\Lambda, H^0(\tilde U, \mathcal{O})),$$ where  
\begin{align*}
Z^1(\Lambda, H^0(\tilde U, \mathcal{O})):= & \{ A: \Lambda \times \tilde{U} \rightarrow \C \ | \\ & A(\lambda, \cdot)\in H^0(\tilde U, \mathcal{O}) \ \forall \lambda \in \Lambda, \\ & A(\lambda_1+\lambda_2,z)=A(\lambda_1, z+\lambda_2)+A(\lambda_2, z), \ \forall \lambda_1, \lambda_2 \in \Lambda, \ \forall z\in \tilde{U}\},\\
B^1(\Lambda, H^0(\tilde U, \mathcal{O})):= & \{ A: \Lambda \times \tilde{U} \rightarrow \C \ |  \ \exists g\in H^0(\tilde U, \mathcal{O}) \\ & A(\lambda, z)=g(z+\lambda)-g(z) \ \forall \lambda \in \Lambda, \ \forall z\in \tilde{U}\}.
\end{align*}

The strategy is to define an infinite family of linearly independent elements in $H^1(\Lambda, H^0(\tilde U, \mathcal{O}))$.

For any $\sigma \in \Z^{n-m} \setminus \{0\}$ we set $\eta_{\sigma}:=\mathrm{max}_{r_j} |\mathrm{exp}(2\pi \mathrm{i} ^{t}\sigma \cdot r_j)-1|$, where $r_j$ for $j \in \{1, \ldots, 2m\}$ are the columns of $R$. 
We shall denote  by $v_i$ the columns of $P$.

For $\lambda=\sum_{j=1}^{n+m} n_jv_j\in \Lambda$, $n_j\in\Z$ we will further denote by 
$l(\lambda):=\sum
_{j=1}^{n+m} |n_j|$.

For each $x \in (0, 1)$, we define an element $A^{(x)}\in Z^1(\Lambda, H^0(\tilde U, \mathcal{O}))$ by:
\begin{align}\label{definitesumanzi}
A^{(x)}(\lambda, z) &:=  \sum_{k \in \N} a^{x|\sigma(k)|} \left( \frac{\mathrm{exp}(2\pi \mathrm{i} ^{t}\sigma(k) \cdot (\lambda_{m+1}, \ldots, \lambda_n))-1}{\eta_{\sigma(k)}} \right)\mathrm{exp}\left(2\pi \mathrm{i} ^{t}\sigma(k) \cdot (z_{m+1}, \ldots, z_n)\right).
\end{align}

Let us check the holomorphicity of $A^{(x)}(\lambda, \cdot)$ on $\tilde{U}$ for every $\lambda$,  the other condition being clearly satisfied. 
To this aim, as 
$\frac{\mathrm{exp}(2\pi \mathrm{i} ^{t}\sigma(k) \cdot (\lambda_{m+1}, \ldots, \lambda_n))-1}{\eta_{\sigma(k)}}$ is bounded by $l(\lambda)$ 
it suffices to check that the series 
$S:= \sum_{k \in \N} a^{x|\sigma(k)|}  |w_{m+1}|^{\sigma (k)_1} \cdots |w_n|^{\sigma (k)_{n-m}}$ is uniformly convergent on 
$\mathbb{D}^{n-m}$. But this is clear since $a^{x}<1$.

Note that
\begin{equation}\label{eq:periodicity}
A^{(x)}(v_i, z) =0, \ \forall z\in\tilde U, \ \forall i \in \{1, \ldots, n-m\}.
\end{equation}

Take now $s>0$ and $0<x_1< \ldots< x_s<1$. We will show that the classes of  $A^{(x_1)}, \ldots, A^{(x_s)}$ are $\C$-linearly independent in $H^1(\Lambda, H^0(\tilde U, \mathcal{O}))$.

Suppose that this is not the case. Then there exist $c_1, \ldots, c_s\in\C$, not all zero and a 
 holomorphic function $g$ on $\tilde{U}$ such that
\begin{equation}\label{functiag}
\sum_{i=1}^{s}c_i A^{(x_i)}(\lambda, z)=g(z+\lambda)-g(z).
\end{equation}
From \eqref{eq:periodicity} and  \eqref{functiag} we deduce that $g$ is $(0,\Z^{n-m})$-periodic and therefore has a Fourier series expansion
\begin{equation}\label{Fourierg}
g=\sum_{\sigma \in \Z^{n-m}\setminus \{0\}} g_{\sigma}\mathrm{exp}(2\pi\mathrm{i}^{t}\sigma \cdot (z_{m+1}, \ldots, z_n)).
\end{equation}
Using now \eqref{definitesumanzi} and plugging \eqref{Fourierg} in \eqref{functiag}, we get $g_{\sigma}=\sum_{i=1}^{s}c_i \frac{a^{x_i|\sigma(k)|}}{\eta_\sigma(k)}$ if $\sigma=\sigma(k)$  for some $k\in\N$ and
$g_{\sigma}=0$ otherwise. Therefore 
\begin{equation*}
g=\sum_{\sigma(k) \in \Z^{n-m}\setminus \{0\}} \left(\sum_{i=1}^{s}c_i \frac{a^{x_i|\sigma(k)|}}{\eta_{\sigma(k)}}\right)\mathrm{exp}(2\pi\mathrm{i}^{t}\sigma(k) \cdot (z_{m+1}, \ldots, z_n)).
\end{equation*}
Since $g$ is holomorphic, the following series is absolutely uniformly convergent on $ \mathbb{D}^{n-m}$:
\begin{equation*}
g_1:=\sum_{\sigma(k) \in \Z^{n-m}\setminus \{0\}} \left(\sum_{i=1}^{s}c_i \frac{a^{x_i|\sigma(k)|}}{\eta_{\sigma(k)}}\right)w_{m+1}^{\sigma(k)_1} \cdots w_n^{\sigma(k)_{n-m}}.
\end{equation*}

A straightforward computation shows that $||^{t}\sigma(k)R - ^{t}\tau(k)||<a^{|\sigma(k)|}$ for some $\tau(k)\in \Z^{2m}$ entails $\eta_{\sigma}<2 \pi a^{|\sigma(k)|}$, for all $k$.
It follows that
$$L:=\mathrm{lim}\, \mathrm{sup}_{k}\frac{1}{\sqrt[|\sigma(k)|]{\eta_{\sigma(k)}}}\ge a^{-1}.$$
Set $\rho:=\frac{1}{L}$. We have $\rho\le a$.

Define 
$$S_i:=c_i \sum_{k\in \N} \left(\frac{a^{x_i|\sigma(k)|}}{\eta_{\sigma(k)}}\right)w_{m+1}^{\sigma(k)_1} \cdots w_n^{\sigma(k)_{n-m}}.$$
We may suppose that all coefficients $c_i$ are non-zero. Restricting $g$ and the series $S_i$ to $\mathbb{D}$ via the diagonal embedding $\mathbb{D}\hookrightarrow \mathbb{D}^{n-m}$ we get 
\begin{equation}\label{eq:series}
S_1|_{\mathbb{D}}=g_1|_{\mathbb{D}}-S_2|_{\mathbb{D}}-\ldots-S_{s}|_{\mathbb{D}}.
\end{equation} 
But the convergence radius of each $S_i|_{\mathbb{D}}$ equals $a^{-x_i}\rho$ and is thus lower or equal to $a^{1-x_i}$ and also lower than $1$. It follows that the convergence radius of the series appearing on the left hand side of  equation \eqref{eq:series}  is strictly smaller than the convergence radius of the right hand side. This is a contradiction. 

In fact the family $A^{(x)}$, $x\in(0,1)$  provides an infinite set of linearly independent elements of $H^1(U, \mathcal{O})$.
\end{proof}

\begin{corollary}
Let $X=\C^n/\Lambda$ be a Cousin group. Then  $\Lambda$ is strongly dispersive if and only if $H^1(U,\mathcal{O})$ is finitely generated for every convex domain $U$ in $X$. 
\end{corollary}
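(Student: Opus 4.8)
The plan is to deduce the statement directly by combining the two main results established above, since the corollary is nothing more than the conjunction of the forward direction of Theorem~\ref{main} and the converse provided by Theorem~\ref{thm:reciprocal}. For the implication ``$\Lambda$ strongly dispersive $\Rightarrow$ $H^1(U,\mathcal{O})$ finitely generated for every convex $U$'', I would simply invoke Theorem~\ref{main}. That theorem asserts that under strong dispersiveness every group $H^q(U,\Omega^p)$ attached to a ``convex'' domain $U$ is finitely generated, with $\dim_\C H^q(U,\Omega^p)={n \choose p}{m \choose q}$. Specializing to $p=0$ and $q=1$ gives
\begin{equation*}
\dim_\C H^1(U,\mathcal{O})={n \choose 0}{m \choose 1}=m<\infty,
\end{equation*}
so the required finiteness holds, uniformly over all ``convex'' $U$.

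For the converse implication, ``$H^1(U,\mathcal{O})$ finitely generated for every ``convex'' $U$ $\Rightarrow$ $\Lambda$ strongly dispersive'', I would observe that this is precisely the content of Theorem~\ref{thm:reciprocal}: its hypothesis is the finite dimensionality of $H^1(U,\mathcal{O})$ for every open $U$ whose inverse image in $\C^n$ is convex, and its conclusion is the strong dispersiveness of $\Lambda$. Since $H^1(U,\mathcal{O})$ is a $\C$-vector space, finite generation and finite dimensionality coincide, so the two hypotheses match verbatim and no additional work is needed.

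Consequently there is no genuine obstacle to overcome here; the corollary is a formal consequence of the two equivalences just proved. The only point meriting a word is the identification of ``finitely generated'' with ``finite dimensional'' for these complex vector spaces, which ensures that the input of Theorem~\ref{thm:reciprocal} is exactly the output of the specialized Theorem~\ref{main}, and conversely. I would therefore present the proof as a one-line citation of the two theorems in the appropriate directions.
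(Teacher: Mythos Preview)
Your proposal is correct and matches the paper's own treatment: the corollary is stated immediately after Theorem~\ref{thm:reciprocal} without proof, being an evident combination of Theorem~\ref{main} (specialized to $p=0$, $q=1$) and Theorem~\ref{thm:reciprocal}. Your remark identifying finite generation with finite dimensionality for $\C$-vector spaces is the only point worth making explicit, and you have done so.
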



\section{Dolbeault cohomology of Oeljeklaus-Toma manifolds}\label{section:OT}

In this section we will apply Theorem~\ref{main} to determine the Dolbeault cohomology of Oeljeklaus-Toma manifolds.
We start by a brief presentation of their construction  following \cite{OT}.

Let $\Q \subseteq K$ be an algebraic number field with $n$ embeddings in $\C$, out of which $s$ are real, $\sigma_1,\ldots,\sigma_s\colon K\to \R$, and $2t$ are complex conjugated embeddings, $\sigma_{s+1},\ldots,\sigma_{s+t},\sigma_{s+t+1}=\overline\sigma_{s+1},\ldots,\sigma_{s+2t}=\overline\sigma_{s+t} \colon K\to\C$. Clearly, $n=s+2t$. In the sequel we shall only consider algebraic number fields for which $s, t \geq 1$.

Let $\mathcal O_K$ be the ring of algebraic integers of $K$, and $\mathcal O_K^{*,+}$ be the group of totally positive units, which is the subset of $\mathcal O_K$ consisting of those units with positive image through all the real embeddings. 

Consider the action $\mathcal{O}_K \circlearrowleft \mathbb H^s\times\C^t$ given by:
$$ T_a(w_1,\ldots,w_s,z_{s+1},\ldots,z_{s+t}) := (w_1+\sigma_1(a),\ldots,z_{s+t}+\sigma_{s+t}(a)) , $$
where $\mathbb{H}$ denotes the upper half-plane and the action $\mathcal{O}_K^{*,+} \circlearrowleft \mathbb H^s\times\C^t$ given by dilatations,
$$ R_u(w_1,\ldots,w_s,z_{s+1},\ldots,z_{s+t}) := (w_1\cdot \sigma_1(u),\ldots,z_{s+t}\cdot \sigma_{s+t}(u)) . $$
In \cite{OT} it is shown that there always exists a subgroup $U\subseteq\mathcal{O}_K^{*,+}$ such that the action $\mathcal{O}_K\rtimes U \circlearrowleft \mathbb{H}^s\times\C^t$ has no fixed point, is properly discontinuous and co-compact. We shall call such a subgroup {\it admissible}. The {\em Oeljeklaus-Toma manifold} (OT, shortly) associated to the algebraic number field $K$ and to the admissible subgroup of positive units $U$ is
$$ X(K,U) \;:=\; \left. \mathbb H^s\times\C^t \middle\slash \mathcal O_K\rtimes U. \right.$$
Note that the admissibility of $U$ is equivalent to the fact that the action of $U$ on $\mathbb{R}^s_{>0}$ by $u \cdot (r_1, \ldots, r_s)=(\sigma_1(u)r_1, \ldots, \sigma_s(u)r_s)$ is properly discontinuous and co-compact.  By  construction, $X(K, U)$ is a smooth fibre bundle over $\mathbb{R}_{>0}^s/U$, which is diffeomorphic to a real $s$-dimensional torus $\mathbb{T}^s$. Moreover, the fibre is again a real torus:
\begin{equation}\label{fibrat}
\mathbb{T}^{s+2t} \rightarrow X(K, U) \xrightarrow{\pi} \mathbb{T}^s,
\end{equation}
but the fibration is not principal  (and of course not holomorphic). The submersion map $\pi$ is given by:
\begin{equation}\label{submersion}
\pi ([w_1, \ldots, w_s, z_1, \ldots, z_t])=\widehat{(\Im w_1, \ldots, \Im w_s)}.
\end{equation}
  We call $X(K, U)$ {\it of simple type} if there exists no proper intermediate extension $\mathbb{Q} \subset K^' \subset K$ such that $U \subset O^*_{K^'}$.

By \cite[Lemma 2.4]{OT} $\C^{s+t}/\mathcal{O}_K$ is a Cousin group and 
 by Proposition~\ref{prop:algebraic_lattices} one has
\begin{proposition}\label{prop:condition_S_for_OT}
The discrete subgroup $\mathcal{O}_K$ is strongly dispersive.
\end{proposition}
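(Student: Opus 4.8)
The plan is to verify the two hypotheses of Proposition~\ref{prop:algebraic_lattices} for $\Lambda=\mathcal{O}_K$ and then invoke it directly. Concretely, I must check that $\C^{s+t}/\mathcal{O}_K$ is a Cousin group and that $\mathcal{O}_K$ admits a period matrix all of whose entries are algebraic numbers.

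The Cousin property is exactly \cite[Lemma 2.4]{OT}: under the embedding $a\mapsto(\sigma_1(a),\ldots,\sigma_{s+t}(a))$ the ring $\mathcal{O}_K$ becomes a discrete subgroup of $\C^{s+t}$ of rank $s+2t$, so we are in the situation of Section~\ref{section:prelim} with complex dimension $s+t$ and excess rank $m=t$, and the quotient carries no non-constant holomorphic function.

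For the algebraicity I would start from the matrix $P_0$ whose columns are the images $(\sigma_1(a_j),\ldots,\sigma_{s+t}(a_j))^{t}$ of a fixed $\Z$-basis $a_1,\ldots,a_{s+2t}$ of $\mathcal{O}_K$. Each entry $\sigma_i(a_j)$ is the image of an algebraic integer under a field embedding, hence an algebraic number, so $P_0$ has algebraic entries. Bringing $P_0$ into the normal form \eqref{standard} costs only a right multiplication by some $M\in\mathrm{GL}_{s+2t}(\Z)$ (a $\Z$-basis change, harmless for algebraicity) together with a left multiplication by some $L\in\mathrm{GL}_{s+t}(\C)$ (a change of complex coordinates). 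After reordering coordinates so that the $t$ complex-embedding directions occupy the top block and the $s$ real-embedding directions the bottom block, the matrix $L$ normalizing the bottom-left identity block and the torus block is obtained by inverting the relevant square submatrices of $P_0M$; as $P_0M$ is algebraic, so are these inverses, whence all entries of the resulting blocks $T$ and $R$ are algebraic. The block $R$ is moreover real, as \eqref{standard} requires, because the normalization of the bottom block, where the entries $\sigma_i(a)$ with $i\le s$ are already real, is carried out by a real matrix. With both hypotheses verified, Proposition~\ref{prop:algebraic_lattices} yields at once that $\mathcal{O}_K$ is strongly dispersive.

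The step needing the most care is this normalization: one must know that a $\Z$-basis of $\mathcal{O}_K$ and an ordering of coordinates can be chosen so that $s$ columns reduce to $\begin{pmatrix}O\\ I_s\end{pmatrix}$ while the remaining $2t$ assemble into $\begin{pmatrix}T\\ R\end{pmatrix}$, all through algebraic operations. This rests on the $\R$-linear independence of the distinct embeddings $\sigma_1,\ldots,\sigma_{s+t}$ on $\mathcal{O}_K\otimes_{\Z}\R$, which makes the submatrices to be inverted nonsingular; everything else is then forced and stays algebraic. In fact this is precisely the verification already carried out for $\mathcal{O}_K$ in the proof of \cite[Theorem 4.3]{bo} underlying Proposition~\ref{prop:algebraic_lattices}, so one may alternatively quote that reference for the algebraicity of $R$ and conclude immediately.
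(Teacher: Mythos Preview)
Your proposal is correct and follows exactly the paper's route: the paper deduces the proposition directly from Proposition~\ref{prop:algebraic_lattices} together with the fact, quoted from \cite[Lemma~2.4]{OT}, that $\C^{s+t}/\mathcal{O}_K$ is a Cousin group. You have simply unpacked the algebraicity hypothesis in more detail than the paper does, and your remark that this verification is already contained in the proof of \cite[Theorem~4.3]{bo} matches the paper's own reference in the proof of Proposition~\ref{prop:algebraic_lattices}.
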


Theorem \ref{main} will be applied to the convex open subset $(\H^s\times\C^t)/\mathcal{O}_K$ of the Cousin group   $\C^{s+t}/\mathcal{O}_K$.

{\it Warning:}
In the previous section, we denoted by $U$ a convex domain in a Cousin group, but for the rest of the exposition, $U$ shall only stand for an admissible group of positive units. Also $n$ equals now $s+2t$ and no longer denotes the dimension of the Cousin group we consider.
From now on, we use the notation $X$ instead of $X(K, U)$ for the Oeljeklaus-Toma manifold and not for the Cousin group $\C^{s+t}/\mathcal{O}_K$.

\begin{remark}\label{rem:generalizedOT}
In \cite{Moosa-Toma} the construction of Oeljeklaus-Toma manifolds was slightly generalized by replacing the discrete subgroup $\mathcal{O}_K$ by an additive subgroup $M$ of rank $s+2t$ which is stable under the action of $U$. The resulting manifolds $X(M,U)$ were shown to admit finite unramified covers of type $X(\mathcal{O}_K, U)$. All our results extend without difficulty to this larger class of compact complex manifolds. When $s=t=1$ the class of manifolds of type $X(M,U)$ coincides with the class of Inoue-Bombieri surfaces, \cite[Remark 8]{Moosa-Toma}.
\end{remark}

By the Dolbeault isomorphism, $H^{p,q}_{\overline{\partial}} (X)\simeq H^q(X, \Omega^p)$, where $\Omega^p$ is the sheaf of germs of holomorphic $p$-forms.

We shall compute $H^q(X, \Omega^p)$ by using three instruments: the Leray-Serre spectral sequence associated to the fibration \eqref{fibrat}, Theorem~\ref{main} and Fr\" olicher-type inequalities.

We denote by $^{p}E_r^{\cdot, \cdot}$ the Leray-Serre spectral sequence associated to \eqref{fibrat} and the sheaf $\Omega^p$. 
Then $^{p}E_2^{i, j}=H^{i}(\mathbb{T}^s, R^j\pi_*\Omega^p)$, where $R^j\pi_*\Omega^p$ is the sheafification of the presheaf $\mathcal{T}^p_j$ given by:
\begin{equation}
\mathcal{T}^p_j(W)=H^{j}(\pi^{-1}(W), \Omega^p_{|\pi^{-1}(W)}),
\end{equation}
for any open set $W$ of $\mathbb{T}^s$. We use the notation $\hat{\mathcal{T}^{p}_j}$ from now on, instead of $R^j \Omega^{p}$.

\begin{lemma}\label{Lem:sigma}
For any $0 \leq p, j \leq s+t$, $\hat{\mathcal{T}^{p}_j}$ is the local system on $\mathbb{T}^s$ associated to the representation $\rho: U \rightarrow GL\left(N(p, j), \C \right)$,
\begin{equation}
\rho(u)=diag(\sigma_I(u)\overline{\sigma}_{J}(u)),
\end{equation}
where $N(p, j)={s+t \choose p}{t \choose j}$, $I$ runs through all the subsets of length $p$ of $\{1, \ldots, s+t\}$, $J$ through all the subsets of length $j$ of $\{1, \ldots, t\}$ and for any $K=\{i_1, \ldots i_k\} \subseteq \{1, \ldots, s+t\}$, $\sigma_K(u):=\sigma_{i_1}(u) \cdot \ldots \cdot \sigma_{i_k}(u)$, with the convention that if $K \subseteq \{1, \ldots, t\}$,  $\overline{\sigma}_K(u):=\overline{\sigma_{s+i_1}(u)} \cdot \ldots \cdot \overline{\sigma_{s+i_k}(u)}$.
\end{lemma}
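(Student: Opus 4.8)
The plan is to identify $\hat{\mathcal{T}^p_j}$ with the local system on $\mathbb{T}^s$ obtained by descending a constant sheaf along the universal covering $c\colon\R^s_{>0}\to\mathbb{T}^s$, the monodromy being read off from the action of $U$ by dilations. Write $Y:=(\H^s\times\C^t)/\mathcal{O}_K$ for the ``convex'' open subset of the Cousin group $\C^{s+t}/\mathcal{O}_K$. The map $(w,z)\mapsto(\Im w_1,\dots,\Im w_s)$ descends to $p_Y\colon Y\to\R^s_{>0}$; it is $U$-equivariant, since $\Im(\sigma_i(u)w_i)=\sigma_i(u)\Im w_i$ and $\sigma_i(u)>0$ for $i\le s$, and it induces $\pi$ on $X=Y/U$. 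Thus the square relating $Y,X,\R^s_{>0},\mathbb{T}^s$ through the quotient maps $q\colon Y\to X$ and $c$ is Cartesian with horizontal $U$-coverings, and since $\Omega^p_Y=q^*\Omega^p_X$, base change gives $c^*\hat{\mathcal{T}^p_j}\cong R^j(p_Y)_*\Omega^p$.

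Next I would compute $R^j(p_Y)_*\Omega^p$ via Theorem~\ref{main}. For a convex open $\tilde W\subseteq\R^s_{>0}$ the inverse image of $p_Y^{-1}(\tilde W)$ in $\C^{s+t}$ is the tube $(\R^s+\mathrm i\tilde W)\times\C^t$, which is convex; hence $p_Y^{-1}(\tilde W)$ is a ``convex'' domain in $\C^{s+t}/\mathcal{O}_K$. Matching the construction to the normal form \eqref{standard2}, the $s$ real coordinates $w_1,\dots,w_s$ play the role of the $n-m$ directions and the $t$ complex coordinates that of the $m$ torus directions, so the Cousin parameters here are $n=s+t$ and $m=t$. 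By Proposition~\ref{prop:condition_S_for_OT} the subgroup $\mathcal{O}_K$ is strongly dispersive, so Theorem~\ref{main} applies and gives that $H^j(p_Y^{-1}(\tilde W),\Omega^p)$ has the basis $\{[dz_I\wedge d\bar z_J]\mid I\subseteq\{1,\dots,s+t\},\ J\subseteq\{1,\dots,t\},\ |I|=p,\ |J|=j\}$. Because the forms $dz_I\wedge d\bar z_J$ are globally defined on $\C^{s+t}$, the restriction map between nested convex opens carries basis to basis and is an isomorphism; as convex opens form a basis of the topology of $\R^s_{>0}$, the sheaf $R^j(p_Y)_*\Omega^p$ is constant with stalk $V:=\bigoplus_{I,J}\C\,[dz_I\wedge d\bar z_J]$, of rank $N(p,j)=\binom{s+t}{p}\binom{t}{j}$. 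Consequently $\hat{\mathcal{T}^p_j}$ is a local system of rank $N(p,j)$, namely the trivial local system $V$ equipped with the descent data coming from the $U$-action.

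Finally I would compute the monodromy. The deck transformation of $c$ attached to $u\in U$ lifts to $R_u$ on $Y$, and under $R_u(w,z)=(\sigma_1(u)w_1,\dots,\sigma_{s+t}(u)z_{s+t})$ one has $R_u^*dz_i=\sigma_i(u)\,dz_i$ and $R_u^*d\bar z_{s+j}=\overline{\sigma_{s+j}(u)}\,d\bar z_{s+j}$, whence $R_u^*(dz_I\wedge d\bar z_J)=\sigma_I(u)\bar\sigma_J(u)\,dz_I\wedge d\bar z_J$ on the invariant-form basis of $V$. Reading the monodromy of $\hat{\mathcal{T}^p_j}$ off this equivariant structure yields exactly $\rho(u)=\mathrm{diag}(\sigma_I(u)\bar\sigma_J(u))$.

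I expect the main obstacle to be the second step: justifying rigorously that $R^j(p_Y)_*\Omega^p$ is locally constant, i.e.\ that the explicit bases produced by Theorem~\ref{main} are genuinely compatible under restriction so that no class degenerates, and that higher direct images commute with the coverings $c$ and $q$. Once this is secured, the only remaining care is bookkeeping of the monodromy convention (that $R_u^*$, rather than its inverse, realises $\rho(u)$), which the computation above settles.
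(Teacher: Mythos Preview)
Your proposal is correct and follows essentially the same approach as the paper: both apply Theorem~\ref{main} to convex preimages over $\R_{>0}^s$ (using Proposition~\ref{prop:condition_S_for_OT}) to obtain the explicit basis $\{[dz_I\wedge d\bar z_J]\}$, and both read off the monodromy from $u^*(dz_I\wedge d\bar z_J)=\sigma_I(u)\bar\sigma_J(u)\,dz_I\wedge d\bar z_J$. The only difference is packaging: the paper works directly with local trivializations of $\pi$ and a co-finality argument for stalks, whereas you phrase the same computation via base change along the universal cover $c\colon\R^s_{>0}\to\mathbb{T}^s$; your anticipated ``obstacle'' (compatibility of the bases under restriction) is exactly what the paper handles by the co-finality of convex opens, and is indeed immediate since the basis forms are globally defined.
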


\begin{remark} In particular, when $j>t$, $\hat{\mathcal{T}^{p}_j}$ is the sheaf that vanishes on every open set of $\mathbb{T}^s$. Note that $\pi_1(\mathbb{T}^s)=U$.
\end{remark}
\begin{proof}
We show first that $\hat{\mathcal{T}^{p}_j}$ is locally constant, namely, that for any $x \in \mathbb{T}^s$, there exists an open set $W \ni x$ such that $\hat{\mathcal{T}^{p}_j}_{|W}$ is constant. Indeed, let $W \ni x$ be a trivialization open set for \eqref{fibrat},  which can be assumed, by shrinking it if needed, to be the image of an open set $W_1 \times \cdots \times W_s \subset \R_{>0}^{s}$, where each $W_j \subset R_{>0}$ is an open interval. Then for every $1 \leq j \leq s$ we set $\tilde{W}_i:= \mathbb{R} \times i W_j\subset\H$. 
This is an open convex set in $\mathbb{H}$. We further set  $\tilde{W}:=\tilde{W}_1 \times \ldots \times \tilde{W}_s\subset\mathbb{H}^s$.
By \eqref{submersion} $\pi^{-1}(W)= \tilde{W} \times \C^t / \mathcal{O}_K$. 
Since $\tilde{W} \times \C^t$ is an open convex $\mathcal{O}_K$-invariant subset of $\C^{s+t}$ and since $\mathcal{O}_K$ 
is strongly dispersive 
by Proposition~\ref{prop:condition_S_for_OT}, we are in a situation where Theorem~\ref{main} applies. 

By Theorem~\ref{main} applied to $\tilde{W} \times \C^t$ and $\mathcal{O}_K$, we get that for any open convex trivialization set $W$ 
\begin{equation}
\mathrm{dim}_{\C} H^j(\pi^{-1}(W), \Omega^p_{|\pi^{-1}(W)})={s+t \choose p}{t \choose j}=N(p, j).
\end{equation}

The basis of $H^j(\pi^{-1}(W), \Omega^p_{|\pi^{-1}(W)})$ is, therefore, given by $\{[d z_I \wedge d \overline{z}_J] \mid |I|=p,  |J| =j, I \subseteq \{1, \ldots, s+t\}, J \subseteq \{1, \ldots, t\}\}$.

Since the set of convex open sets $W$ is co-final, in the sense that 
$$(\mathcal{T}^p_j)_x = \mathop{\lim_{\longrightarrow}}_{V \ni x} \mathcal{T}^p_j(V)=\mathop{\lim_{\longrightarrow}}_{W \ni x, \\ W convex} \mathcal{T}^p_j(W),$$
we have $(\hat{\mathcal{T}^p_j})_x = (\mathcal{T}^p_j)_x=\C^{N(p, j)}$, meaning that $\hat{\mathcal{T}^p_j}$ is locally constant.
 
In order to determine the corresponding representation of $U$, we need to check how an element $u \in U$ acts on the basis $[d z_I \wedge d \overline{z}_J]$. From the definition of $OT$-manifolds, we have:
\begin{equation*}
u^*(dz_I \wedge d \overline{z}_J)= \sigma_I(u)\overline{\sigma}_J(u)dz_I \wedge d \overline{z}_J
\end{equation*} 
and consequently the representation associated to $\hat{\mathcal{T}^p_j}$ is precisely $\rho$. Since $\rho$ is diagonal, we deduce moreover that 
\begin{equation}\label{splitare}
\hat{\mathcal{T}^p_j} = \bigoplus_{I, J} L_{I, J}, 
\end{equation}
where $L_{I, J}$ is the flat complex line bundle over $\mathbb{T}^s$ associated to the representation $\rho_{I, J}: U \rightarrow \C^*$, $\rho_{I, J}(u)=\sigma_I(u)\overline{\sigma}_J(u)$. 
\end{proof}

\begin{theorem}\label{Hodgedecomposition}
Any OT-manifold $X$ 
satisfies the Hodge decomposition, in the sense that 
\begin{equation*}
\mathrm{dim}_{\C}H_{dR}^l(X)=\sum_{p+q=l} \mathrm{dim}_{\C}H^q(X, \Omega^p).
\end{equation*}
\end{theorem}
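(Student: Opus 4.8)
The plan is to exploit the Leray--Serre spectral sequence $^{p}E_2^{i,j}=H^i(\mathbb{T}^s, \hat{\mathcal{T}^p_j})$ together with the Fr\"olicher-type inequality, arguing by a dimension count that forces degeneration. First I would use Lemma~\ref{Lem:sigma} and the splitting \eqref{splitare} to write $^{p}E_2^{i,j}=\bigoplus_{I,J} H^i(\mathbb{T}^s, L_{I,J})$, where each $L_{I,J}$ is the flat line bundle attached to the character $\rho_{I,J}(u)=\sigma_I(u)\overline{\sigma}_J(u)$ of $U\cong\pi_1(\mathbb{T}^s)$. The cohomology of $\mathbb{T}^s$ with coefficients in such a flat line bundle is computable as group cohomology $H^i(U, \C_{\rho_{I,J}})$, and it is nonzero precisely when the character is trivial (in which case $L_{I,J}$ is the constant sheaf and one recovers $H^i(\mathbb{T}^s,\C)$). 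So the first real task is to sum up $\sum_{i+j=q}\dim {}^{p}E_2^{i,j}$ and thereby bound $\dim_{\C} H^q(X,\Omega^p)\le \sum_{i+j=q}\dim {}^{p}E_2^{i,j}$.

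Next I would feed this into the Fr\"olicher inequality. Since $X$ is compact, all the groups $H^q(X,\Omega^p)$ and $H^l_{dR}(X,\C)$ are finite-dimensional, so
\begin{equation*}
\dim_{\C} H^l_{dR}(X,\C)\le \sum_{p+q=l}\dim_{\C} H^q(X,\Omega^p)\le \sum_{p+q=l}\ \sum_{i+j=q}\dim_{\C}{}^{p}E_2^{i,j}.
\end{equation*}
The Hodge decomposition is exactly the statement that the leftmost inequality is an equality, so the strategy is to show that both inequalities are forced to be equalities by matching the two extreme quantities. The outer bound is controlled by the total $E_2$-term, and since the spectral sequence converges to $H^{\ast}(X,\Omega^p)$ one always has $\dim H^q(X,\Omega^p)\le\sum_{i+j=q}\dim{}^pE_2^{i,j}$; equality here is equivalent to the degeneration of the spectral sequence.

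The key computation I would then perform is to evaluate $\sum_{l}\dim_{\C} H^l_{dR}(X,\C)$ and the full sum $\sum_{p,q,i,j}$ of the $E_2$-terms and check they agree. On the de Rham side one uses that $X$ is a $\mathbb{T}^{s+2t}$-bundle over $\mathbb{T}^s$ and computes $H^\ast_{dR}(X,\C)$ via its own Leray--Serre spectral sequence (again with flat coefficients given by the real characters $u\mapsto\sigma_I(u)\overline{\sigma}_J(u)$ acting on the fibre cohomology). The crucial point is that the \emph{same} arithmetic characters $\rho_{I,J}$ govern both the Dolbeault and the de Rham spectral sequences: in both cases the only surviving contributions come from the indices $(I,J)$ for which $\rho_{I,J}$ is the trivial character. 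I would isolate exactly which pairs $(I,J)$ give $\rho_{I,J}\equiv 1$ on $U$ --- this is a condition on products of embeddings of totally positive units being identically $1$, and by the structure of the unit group (and the simple-type reduction via Remark~\ref{rem:generalizedOT}) the trivial characters can be enumerated precisely. Once the index sets coincide, the total Betti number equals the total Hodge number, squeezing the chain of inequalities into equalities.

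The hard part will be the arithmetic analysis of which characters $\rho_{I,J}=\sigma_I\overline{\sigma}_J$ are trivial on $U$, since this is what pins down $\dim H^i(\mathbb{T}^s, L_{I,J})$ for each $(I,J)$ and ultimately both sides of the Fr\"olicher inequality; a naive bound on the $E_2$-terms that is \emph{strictly} larger than the de Rham total would leave a gap and fail to force degeneration. I expect that, just as in the matching fibre computation for $H^\ast_{dR}$, the triviality of $\rho_{I,J}$ forces a compatibility between the real-embedding indices in $I$ and the complex-embedding indices in $J$, and that counting these carefully yields identical totals on both sides. The remaining verification --- that $d_2$ and all higher differentials vanish --- then follows formally from the equality of totals, completing the proof.
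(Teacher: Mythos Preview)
Your approach is essentially the same as the paper's: chain the Fr\"olicher inequality with the Leray--Serre bound $\dim H^q(X,\Omega^p)\le\sum_{i+j=q}\dim{}^pE_2^{i,j}$, compute the $E_2$-terms via the splitting \eqref{splitare} and the vanishing of flat-line-bundle cohomology for nontrivial characters, and match against the de Rham side to force all inequalities to be equalities.

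Two remarks on where you overcomplicate things. First, what you call ``the hard part'' --- enumerating the pairs $(I,J)$ with $\rho_{I,J}\equiv 1$ --- is not actually needed. The paper never determines which characters are trivial; it simply rewrites the count using $\overline{\sigma_{s+r}}=\sigma_{s+t+r}$ to turn pairs $(I,J)$ with $I\subseteq\{1,\dots,s+t\}$, $J\subseteq\{1,\dots,t\}$ into subsets of $\{1,\dots,s+2t\}$, and then observes that the resulting expression is \emph{exactly} the formula for $\dim H^l_{dR}(X)$ proved in \cite{io}. So the matching is formal once you cite that reference; no arithmetic analysis of the unit group is required, and the argument works degree by degree (you need not pass to totals over all $l$). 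Second, your parenthetical ``simple-type reduction via Remark~\ref{rem:generalizedOT}'' is misplaced: that remark concerns a mild generalization of the construction, not a reduction to the simple-type case, and no such reduction is used in the proof.
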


\begin{proof}
By the Fr\" olicher inequality, we have:
\begin{equation}\label{ineq1}
\mathrm{dim}_{\C}H_{dR}^l(X) \leq \sum_{p+q=l} \mathrm{dim}_{\C}H^q(X, \Omega^p).
\end{equation} 
Since $^{p}E_r^{\cdot, \cdot} \Rightarrow H^*(X, \Omega^p)$, by a Fr\" olicher type inequality, we get:
\begin{equation}\label{ineq2}
\mathrm{dim}_{\C} H^q(X, \Omega^p) \leq \sum_{i+j=q} \mathrm{dim}_{\C} H^i(\mathbb{T}^s, \hat{\mathcal{T}^p_j}).
\end{equation}

By \eqref{splitare}, $\mathrm{dim}_{\C} H^i(\mathbb{T}^s, \hat{\mathcal{T}^p_j})= \mathrm{dim}_{\C} H^i(\mathbb{T}^s, \bigoplus_{I, J}L_{I, J})$ and using now Lemma 2.4 in \cite{io}, the following holds:

\begin{align}\label{eq1}
\mathrm{dim}_{\C} H^i(\mathbb{T}^s, \bigoplus_{I, J}L_{I, J}) & = 
\mathrm{dim}_{\C} H^i(\mathbb{T}^s) \cdot \sharp\{I \subseteq \{1, \ldots, s+t\}, J \subseteq \{1, \ldots, t\} \mid |I|=p, |J|=j, \rho_{I, J} \equiv 1\}\\
& = {s \choose i} \cdot \sharp\{I \subseteq \{1, \ldots, s+t\}, J \subseteq \{1, \ldots, t\} \mid |I|=p, |J|=j, \sigma_I(u) \cdot \overline{\sigma}_{J}(u) \equiv 1\}
\end{align}
Putting together \eqref{ineq1}, \eqref{ineq2} and \eqref{eq1}, we have:
\begin{dmath}\label{dimensiunegrupuri}
\mathrm{dim}_{\C}H_{dR}^l(X) \leq  \sum_{p+q=l} \mathrm{dim}_{\C}H^q(X, \Omega^p) \leq 
\\
\sum_{p+q=l} \sum_{i+j=q}{s \choose i} \cdot \sharp\{I \subseteq \{1, \ldots, s+t\}, J \subseteq \{1, \ldots, t\} \mid |I|=p, |J|=j, \sigma_I(u) \cdot \overline{\sigma}_{J}(u) \equiv 1\}
\end{dmath}

Using the fact that for any $1 \leq r \leq t$, $\overline{\sigma_{s+r}(u)}=\sigma_{s+t+r}(u)$, the last term of the inequality can be rewritten as $\sum_{p+q=l} {s \choose q} \cdot \sharp\{I=\{i_1, \ldots, i_p\} \subseteq \{1, \ldots, s+2t \} \mid \sigma_{i_1}(u)\cdot \ldots \cdot \sigma_{i_p}(u)=1, \forall u \in U\}$. 
By Theorem 3.1 in \cite{io}, this is exactly $\mathrm{dim}_{\C} H^l_{dR}(X)$. Hence all the inequalities above are actually equalities and we obtain Hodge decomposition.
\end{proof}

\begin{corollary}\label{cor:H1}
For any OT manifold $X$ of type $(s, t)$, $\mathrm{dim}_{\C}H^1(X, \mathcal{O})=s$. 
\end{corollary}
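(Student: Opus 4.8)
The plan is to read off $\mathrm{dim}_{\C} H^1(X,\mathcal{O})$ directly from the equalities that come out of the proof of Theorem~\ref{Hodgedecomposition}. Since Hodge decomposition forces the chain \eqref{dimensiunegrupuri} to consist entirely of equalities, and the middle term is obtained from the Fr\"olicher-type inequality \eqref{ineq2} summed over all $(p,q)$ with $p+q=l$, each individual instance of \eqref{ineq2} must already be an equality (a sum of nonnegative quantities vanishes only if each vanishes). Applying this with $p=0$, $q=1$ gives
\[ \mathrm{dim}_{\C} H^1(X,\mathcal{O}) = \sum_{i+j=1} \mathrm{dim}_{\C} H^i(\mathbb{T}^s, \hat{\mathcal{T}^0_j}) = \mathrm{dim}_{\C}H^1(\mathbb{T}^s, \hat{\mathcal{T}^0_0}) + \mathrm{dim}_{\C}H^0(\mathbb{T}^s, \hat{\mathcal{T}^0_1}). \]
So the computation reduces to identifying two entries of the $E_2$-page.

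First I would identify the two local systems via Lemma~\ref{Lem:sigma}. For $p=j=0$ the representation is one-dimensional with $I=J=\varnothing$, so $\rho_{\varnothing,\varnothing}\equiv 1$; hence $\hat{\mathcal{T}^0_0}$ is the constant sheaf $\C$ and $\mathrm{dim}_{\C}H^1(\mathbb{T}^s,\C)=\binom{s}{1}=s$ (equivalently, take $i=1$ in \eqref{eq1}). For $p=0$, $j=1$, the splitting \eqref{splitare} gives $\hat{\mathcal{T}^0_1}=\bigoplus_{r=1}^{t} L_{\varnothing,\{r\}}$, where $L_{\varnothing,\{r\}}$ is the flat line bundle attached to the character $\rho_{\varnothing,\{r\}}\colon U\to\C^*$, $u\mapsto \overline{\sigma_{s+r}(u)}$.

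The only substantive step, which I expect to be the crux of the argument, is to show that each $\mathrm{dim}_{\C}H^0(\mathbb{T}^s, L_{\varnothing,\{r\}})$ vanishes. By \eqref{eq1} this cohomology is nonzero precisely when $\rho_{\varnothing,\{r\}}\equiv 1$, i.e.\ when $\sigma_{s+r}(u)=1$ for every $u\in U$. But $\sigma_{s+r}\colon K\to\C$ is a field embedding, hence injective, so $\sigma_{s+r}(u)=1=\sigma_{s+r}(1)$ would force $u=1$. Since the admissible subgroup $U$ has rank $s\ge 1$, it contains nontrivial units, and therefore $\rho_{\varnothing,\{r\}}\not\equiv 1$ for every $r\in\{1,\ldots,t\}$. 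Consequently $\mathrm{dim}_{\C}H^0(\mathbb{T}^s,\hat{\mathcal{T}^0_1})=0$.

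Combining the two contributions yields $\mathrm{dim}_{\C}H^1(X,\mathcal{O})=s+0=s$, as claimed. The whole proof is thus bookkeeping on the Leray--Serre $E_2$-page, made possible by the equalities of Theorem~\ref{Hodgedecomposition}, together with the elementary remark that a complex embedding of $K$ is injective; no analytic input beyond Theorem~\ref{main} (already used in Lemma~\ref{Lem:sigma}) is required.
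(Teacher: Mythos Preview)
Your proof is correct, but it is not the route the paper takes. The paper's argument is a one-liner: by \cite{OT} one has $b_1=s$ and $H^0(X,\Omega^1)=0$, so Hodge decomposition for $l=1$ gives $s=\dim H^1_{dR}(X)=\dim H^0(X,\Omega^1)+\dim H^1(X,\mathcal{O})=0+\dim H^1(X,\mathcal{O})$. You instead bypass the citation of $b_1$ and $h^{1,0}$ from \cite{OT} and read the answer directly off the $E_2$-page of the Leray spectral sequence, using that the equalities forced in \eqref{dimensiunegrupuri} propagate to each summand of \eqref{ineq2}. Your approach is more self-contained with respect to the machinery already developed in this paper (Lemma~\ref{Lem:sigma}, \eqref{splitare}, \eqref{eq1}), and the only arithmetic input you need is the injectivity of a field embedding together with $\operatorname{rk} U = s\ge 1$; the paper's approach is shorter but imports two facts from \cite{OT}. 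Both are valid and lead to the same formula; your method in fact generalizes immediately to Corollary~\ref{calcul} without further outside input.
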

\begin{proof}
By \cite{OT}, we know that $b_1=s$ and $H^0(X, \Omega^1)=0$. Applying now Theorem~\ref{Hodgedecomposition} for $l=1$, we immediately get 
$\mathrm{dim}_{\C}H^1(X, \mathcal{O})=s$.
\end{proof}

In \cite{io} it is shown that the de Rham cohomology of an OT manifold $X$ can be easily computed if $X$ satisfies the following condition:

{\em Condition (C): $\sigma_I \overline{\sigma}_J \equiv 1$ if and only if $I=J=\emptyset$ or $I=\{1, \ldots, s+t\}$ and $J=\{1, \ldots, t\}$, where $\sigma_I$ and $\overline{\sigma}_J$ are defined on $U$ as in Lemma \ref{Lem:sigma}.}

\begin{remark}
From a number theoretical point of view, Condition (C) means that if a product of embeddings $\sigma_L$ equals $1$ on $U$ then this must be a power of 
 $\sigma_1 \cdot \ldots \cdot \sigma_{s+2t}$. Moreover, it automatically implies that $X(K, U)$ is of simple type. Indeed, if there existed $\mathbb{Q} \subset K^{'} \subset K$ an intermediate extension such that $U \subset K^{'}$, then there would exist a product  $\sigma_{i_1} \cdot \ldots \cdot \sigma_{i_k}$ giving $1$ on $U$, where $k=[K^{'}:\mathbb{Q}]<s+2t$.

A specific situation when Condition (C) holds is when $|\sigma_{s+1}(u)|^2= \ldots = |\sigma_{s+t}(u)|^2$, for any $u \in U$, as shown in the proof of \cite[Proposition 6.4]{io}. Geometrically, this particular condition is equivalent to the existence of a {\it locally conformally K\" ahler metric} on $X(K, U)$, as proven in \cite[Proposition 2.9]{OT} and in the appendix of L. Battisti of \cite{Dub}, Theorem 8.

On the topological side, Condition (C) can also be interpreted as $X(K, U)$ having the lowest possible Betti numbers among the OT-manifolds of type $(s, t)$, see \cite[Theorem 3.1]{io}. 
\end{remark}

By a straightforward computation that results from \eqref{dimensiunegrupuri} being an equality, we also have the following:

\begin{corollary}\label{calcul}
If $X$ satisfies Condition (C), then $$\mathrm{dim}_{\C} H^q(X, \mathcal{O})={s \choose q} \  \text{if} \ q \leq s, \ \mathrm{dim}_{\C} H^q(X, \Omega^{s+t})={s \choose q-t} \ \text{if} \  q \geq t$$ and the rest of the Dolbeault cohomology groups are trivial.
\end{corollary}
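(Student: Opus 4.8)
The plan is to derive Corollary~\ref{calcul} directly from the fact, established in the proof of Theorem~\ref{Hodgedecomposition}, that every inequality in \eqref{dimensiunegrupuri} is in fact an equality. In particular the intermediate Fr\"olicher-type inequality \eqref{ineq2} becomes an equality, which tells us that
\begin{equation*}
\mathrm{dim}_{\C} H^q(X, \Omega^p) = \sum_{i+j=q}\binom{s}{i}\cdot\sharp\{I, J \mid |I|=p,\ |J|=j,\ \sigma_I\overline{\sigma}_J\equiv 1\},
\end{equation*}
where $I\subseteq\{1,\ldots,s+t\}$ and $J\subseteq\{1,\ldots,t\}$. Thus the whole computation reduces to counting, for the relevant values of $(p,q)$, the index pairs $(I,J)$ for which $\rho_{I,J}\equiv 1$.

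First I would treat the case $p=0$, i.e.\ the groups $H^q(X,\mathcal{O})$. Here the only admissible index set is $I=\emptyset$, forcing $\sigma_I\equiv 1$, and we need $J\subseteq\{1,\ldots,t\}$ with $|J|=j$ and $\overline{\sigma}_J\equiv 1$. Condition~(C) asserts that $\sigma_I\overline{\sigma}_J\equiv 1$ happens only in the two extreme cases; with $I=\emptyset$ this forces $J=\emptyset$ as well, so the only surviving term is $j=0$, whence $i=q$. The count $\sharp\{(\emptyset,\emptyset)\}=1$ then gives $\mathrm{dim}_{\C}H^q(X,\mathcal{O})=\binom{s}{q}$, valid whenever $q\le s$ (and $0$ otherwise, since $\binom{s}{q}=0$ for $q>s$). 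Next I would treat $p=s+t$, the groups $H^q(X,\Omega^{s+t})$. Now $|I|=s+t$ forces $I=\{1,\ldots,s+t\}$, so $\sigma_I$ is the product over all of $\{1,\ldots,s+t\}$; by Condition~(C) the relation $\sigma_I\overline{\sigma}_J\equiv 1$ then forces $J=\{1,\ldots,t\}$, hence $j=t$ and $i=q-t$. The single surviving pair again contributes $1$, giving $\mathrm{dim}_{\C}H^q(X,\Omega^{s+t})=\binom{s}{q-t}$, valid for $q\ge t$.

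For all remaining values, i.e.\ $0<p<s+t$ together with the out-of-range $q$, I would argue that every counting term vanishes. When $0<p<s+t$ we can never have $I=\emptyset$ nor $I=\{1,\ldots,s+t\}$, so Condition~(C) rules out $\sigma_I\overline{\sigma}_J\equiv 1$ for every admissible $J$; hence $\mathrm{dim}_{\C}H^q(X,\Omega^p)=0$ for all $q$. This disposes of the ``rest of the Dolbeault cohomology groups.'' The main point, and really the only subtlety, is the careful bookkeeping of which $(i,j)$ with $i+j=q$ survive: once one observes that Condition~(C) collapses each of the surviving cases to a \emph{unique} index pair $(I,J)$ with a forced value of $j$, the binomial factor $\binom{s}{i}=\binom{s}{q-j}$ is pinned down and the stated dimensions drop out immediately. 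I do not expect any genuine obstacle here; the entire corollary is a bookkeeping consequence of the equality already proven in Theorem~\ref{Hodgedecomposition} combined with the definition of Condition~(C).
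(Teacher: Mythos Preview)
Your proposal is correct and follows exactly the route the paper indicates: the corollary is stated in the paper as a ``straightforward computation that results from \eqref{dimensiunegrupuri} being an equality,'' and what you have written is precisely that computation, carried out by reading off the individual equality in \eqref{ineq2} and then using Condition~(C) to see that only the two extremal pairs $(I,J)=(\emptyset,\emptyset)$ and $(I,J)=(\{1,\ldots,s+t\},\{1,\ldots,t\})$ survive.
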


\begin{remark}
In \cite{kasuya} it is shown that any OT manifold $X$ admits a solvmanifold presentation $\Gamma \setminus G$, in such a way that the natural complex structure on $G$ is $G$-left invariant. It is well known that the Lie algebra cohomology $H^*(\mathfrak{g})$ injects into $H^*_{dR}(X)$. If $X$ satisfies Condition (C), one can check that the generators given in \cite{io} are $G$-invariant, hence the inclusion morphism $H^*(\mathfrak{g}) \to H^*_{dR}(X)$ is an isomorphism. This and the Hodge decomoposition for $X$ gives an isomorphism at the level of Dolbeault cohomologies $H^{*, *}(\mathfrak{g}) \cong H^{*, *}(X)$.    
\end{remark}

\begin{corollary}
If $X$ is of simple type, then $H^0(X, \Omega^2)=0=H^1(X, \Omega^1)$ and $\mathrm{dim}_{\C}H^2(X, \mathcal{O})={s \choose 2}$.
\end{corollary}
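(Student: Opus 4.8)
The plan is to first extract from the proof of Theorem~\ref{Hodgedecomposition} an exact formula for each Dolbeault number, and then to translate the three assertions into the non-existence of certain multiplicative relations among the embeddings of $K$ restricted to $U$.

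First I would record that in the proof of Theorem~\ref{Hodgedecomposition} every inequality is in fact an equality: since $\dim_\C H^l_{dR}(X)$ equals the last line of \eqref{dimensiunegrupuri}, the chain \eqref{ineq1}--\eqref{dimensiunegrupuri} forces equality in \eqref{ineq2} for each pair $(p,q)$ separately, and combined with \eqref{eq1} this yields
$$\dim_\C H^q(X,\Omega^p)=\sum_{i+j=q}\binom{s}{i}\cdot\#\{(I,J)\mid |I|=p,\ |J|=j,\ \sigma_I\overline{\sigma}_J\equiv 1 \text{ on } U\},$$
with $I\subseteq\{1,\dots,s+t\}$ and $J\subseteq\{1,\dots,t\}$. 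Specialising to $(p,q)=(2,0)$, $(1,1)$ and $(0,2)$, the three claims reduce to purely arithmetic statements about $U$. For $H^0(X,\Omega^2)$ the only contributing term counts pairs $\{i_1,i_2\}\subseteq\{1,\dots,s+t\}$ with $\sigma_{i_1}\sigma_{i_2}\equiv 1$, so the claim is that there are none. For $H^1(X,\Omega^1)$ one meets the relations $\sigma_i\equiv 1$ and $\sigma_i\overline{\sigma}_k=\sigma_i\sigma_{s+t+k}\equiv 1$, both of which must be impossible. For $H^2(X,\mathcal{O})$ the term $(i,j)=(2,0)$ with $(I,J)=(\emptyset,\emptyset)$ contributes exactly $\binom{s}{2}$, while the remaining terms count $\sigma_{s+t+k}\equiv 1$ and $\sigma_{s+t+k_1}\sigma_{s+t+k_2}\equiv 1$, which I must show to be empty. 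Thus everything reduces to: no single embedding $\sigma_a$ is $\equiv 1$ on $U$, and no product $\sigma_a\sigma_b$ of two \emph{distinct} embeddings from the full list $\sigma_1,\dots,\sigma_{s+2t}$ is $\equiv 1$ on $U$.

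The two "single embedding" statements are immediate: each $\sigma_a\colon K\to\C$ is an injective field homomorphism, so $\sigma_a(u)=1=\sigma_a(1)$ for all $u\in U$ would force $U=\{1\}$, contradicting $\rk U=s\geq 1$. When $a,b\leq s$ the relation $\sigma_a\sigma_b\equiv 1$ gives, after taking $\log$ of the (positive, by total positivity) values, a nontrivial linear dependence among $\log\sigma_1|_U,\dots,\log\sigma_s|_U$; this is impossible because admissibility of $U$ means precisely that $u\mapsto(\log\sigma_1(u),\dots,\log\sigma_s(u))$ realises $U$ as a full lattice in $\R^s$ (equivalently $\R^s_{>0}/U\cong\mathbb{T}^s$ is compact).

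The crux is to exclude $\sigma_a\sigma_b\equiv 1$ when at least one of the two distinct embeddings is complex, and here simple type enters. I would first note that simple type forces $\Q(U)=K$: the subfield $\Q(U)$ contains $U$, hence $U\subseteq\mathcal{O}_{\Q(U)}^{*}$, and since $U$ is infinite $\Q(U)\neq\Q$, so $\Q(U)=K$ by definition of simple type; consequently any two distinct embeddings of $K$ already differ on $U$. This disposes of the mixed case $a\leq s<b$ directly: $\sigma_a(u)>0$ would make $\sigma_b(u)=\sigma_a(u)^{-1}$ real, i.e. the complex embedding $\sigma_b$ would satisfy $\sigma_b\equiv\overline{\sigma}_b$ on $U$, contradicting $\Q(U)=K$. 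For the genuinely complex--complex case I would pass to the Galois closure $L/\Q$ with group $G$, pick $g_a,g_b\in G$ inducing $\sigma_a,\sigma_b$, and rewrite $\sigma_a\sigma_b\equiv 1$ as $\gamma(u)=u^{-1}$ for all $u\in U$, where $\gamma:=g_a^{-1}g_b$. Since $\Q(U)=K$, $\gamma$ preserves $K$ and restricts to an automorphism with $\gamma^2|_U=\mathrm{id}$; the induced permutation $\pi$ of the embeddings ($\sigma_i\circ\gamma=\sigma_{\pi(i)}$) preserves the real ones, so for any real $\sigma_j$ total positivity gives $\log\sigma_{\pi(j)}(u)=\log\sigma_j(u^{-1})=-\log\sigma_j(u)$, once more a forbidden linear relation on the full lattice $\log(U)\subset\R^s$. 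The main obstacle is exactly this complex--complex relation inside $H^0(X,\Omega^2)$: absolute values alone, which sufficed in the real case, do not rule it out, and it is precisely here that admissibility must be upgraded to simple type and the involution $\gamma$ brought in.
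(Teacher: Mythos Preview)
Your proof is correct and follows the same route as the paper: both extract from the equalities in \eqref{dimensiunegrupuri} the explicit formula for each $\dim_\C H^q(X,\Omega^p)$ and then reduce the three assertions for $l=2$ to the arithmetic statement that, under simple type, no product $\sigma_a\sigma_b$ of two distinct embeddings (and no single $\sigma_a$) is identically $1$ on $U$. The only difference is that the paper obtains this arithmetic fact by invoking the proof of \cite[Proposition~2.3]{OT} (which also gives $b_2=\binom{s}{2}$), whereas you supply a self-contained argument: the observation $\Q(U)=K$ from simple type, the lattice argument for the real case, and the Galois-closure involution $\gamma$ with $\gamma(u)=u^{-1}$ to push the complex--complex case back to a forbidden linear relation among the real logarithms. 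Your argument tacitly uses $s\ge 1$ (to have at least one real $\sigma_j$ on which to run the last step), which is part of the standing hypotheses on OT manifolds, so no gap arises.
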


\begin{proof}
By the proof of \cite[Proposition 2.3]{OT}, we deduce that if $X$ is of simple type, then for any different indices $i_1, i_2 \in \{1, \ldots, s+2t\}$, $\sigma_{i_1}\sigma_{i_2}: U \rightarrow \C^*$ is not trivial and moreover, $b_2={s \choose 2}$. Therefore, using again \eqref{dimensiunegrupuri} for $l=2$, we obtain the stated dimensions.
\end{proof}

Finally by Corollary \ref{cor:H1} and Remark \ref{rem:generalizedOT} we get

\begin{corollary}\label{cor:inoue}
For an Inoue-Bombieri surface  $X$ one has $\mathrm{dim}_{\C}H^1(X, \mathcal{O})=1$. 
\end{corollary}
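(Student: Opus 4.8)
The statement to prove is Corollary~\ref{cor:inoue}: for an Inoue-Bombieri surface $X$, one has $\dim_{\C} H^1(X, \mathcal{O}) = 1$.

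The plan is to identify Inoue-Bombieri surfaces with Oeljeklaus-Toma manifolds of the smallest possible type and then invoke the already-established dimension formula. By Remark~\ref{rem:generalizedOT}, when $s=t=1$ the generalized Oeljeklaus-Toma manifolds $X(M,U)$ are precisely the Inoue-Bombieri surfaces. Thus an Inoue-Bombieri surface is an OT-manifold of type $(s,t)=(1,1)$. First I would make this identification explicit, reducing the problem to computing $\dim_{\C} H^1(X,\mathcal{O})$ for an OT-manifold with $s=1$.

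Next I would apply Corollary~\ref{cor:H1}, which states that for any OT-manifold of type $(s,t)$ one has $\dim_{\C} H^1(X,\mathcal{O}) = s$. Since our surface has $s=1$, this immediately yields $\dim_{\C} H^1(X,\mathcal{O}) = 1$. The only subtlety is that Corollary~\ref{cor:H1} is stated for the classical OT-manifolds $X(\mathcal{O}_K,U)$, whereas Inoue-Bombieri surfaces correspond to the generalized construction $X(M,U)$; this gap is closed by the assertion in Remark~\ref{rem:generalizedOT} that all the preceding results extend without difficulty to the class of manifolds $X(M,U)$. Consequently, Corollary~\ref{cor:H1} holds verbatim in the generalized setting, and the computation goes through unchanged.

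I do not anticipate a genuine obstacle here, as the corollary is a direct specialization. The only point requiring care is the appeal to the generalized construction: one must ensure that the chain of results culminating in Corollary~\ref{cor:H1}—namely the strong dispersiveness of the relevant lattice (Proposition~\ref{prop:condition_S_for_OT} and its analogue for $M$), the applicability of Theorem~\ref{main} to the fibre, the local system computation of Lemma~\ref{Lem:sigma}, and the Hodge decomposition of Theorem~\ref{Hodgedecomposition}—all survive the replacement of $\mathcal{O}_K$ by $M$. Since $M$ has rank $s+2t$ and is $U$-stable, and since $X(M,U)$ admits a finite unramified cover of type $X(\mathcal{O}_K,U)$, every ingredient carries over, so the final assertion follows by simply setting $s=1$ in Corollary~\ref{cor:H1}.
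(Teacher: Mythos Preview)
Your proposal is correct and follows essentially the same approach as the paper: the paper's proof is simply the one-line observation that the result follows from Corollary~\ref{cor:H1} and Remark~\ref{rem:generalizedOT}, which is exactly what you do, with the additional care of spelling out why the generalized setting $X(M,U)$ causes no trouble.
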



\end{document}